\newcommand{\ctext}[1]{\raise0.2ex\hbox{\textcircled{\scriptsize{#1}}}}
\newcommand{\Q}{\mathbb{Q}}
\newcommand{\R}{\mathbb{R}}
\newcommand{\B}{\mathbb{B}}
\newcommand{\Z}{\mathbb{Z}}
\newcommand{\C}{\mathbb{C}}
\newcommand{\PP}{\mathbb{P}}
\newcommand{\e}{\varepsilon}
\newcommand{\s}{\sigma}
\newcommand{\G}{\text{Gal}}
\newcommand{\Gal}{\text{Gal}}
\newcommand{\Tr}{\text{Tr}}
\newcommand{\sgn}{\text{sgn}}
\newcommand{\setmid}{\mathrel{}\middle|\mathrel{}}
\theoremstyle{definition}
 \newtheorem{theorem}{Theorem}[section]
 \crefname{theorem}{Theorem}{Theorems}
 \newtheorem{proposition}[theorem]{Proposition}
 \crefname{proposition}{Proposition}{Propositions}
 \newtheorem{lemma}[theorem]{Lemma}
 \crefname{lemma}{Lemma}{Lemmas}
 \crefname{corollary}{Corollary}{Corollaries}
 \crefname{conjecture}{Conjecture}{Conjectures}
 \crefname{question}{Question}{Questions}
 \newtheorem{problem}[theorem]{Problem}
 \crefname{problem}{Problem}{Problems}
 \crefname{remark}{Remark}{Remarks}
\theoremstyle{definition} 
 \newtheorem{definition}[theorem]{Definition}
 \crefname{definition}{Definition}{Definitions}
 \crefname{example}{Example}{Examples}
 \crefname{caution}{Caution}{Cautions}
 \crefname{equation}{formula}{formulas}
\title{On some periodic continued fractions along the $\mathbb{Z}_2$ extension over $\mathbb{Q}$}
\author{Yoshinori Kanamura}
\address{Y. Kanamura NTT DATA Mathematical Systems Inc., Tokyo, Japan}
\email{kana1118yoshi@keio.jp}
\author{Hyuga Yoshizaki}
\address{H. Yoshizaki Department of Mathematics, Faculty of Science and Technology, Tokyo University of Science; 2641 Yamazaki, Noda-shi, Chiba, Japan}
\email{yoshizaki.hyuga@gmail.com}
\subjclass[2020]{primary 	11J70;  %Continued fractions and generalizations
	secondary
		11A55; %Continued fractions
        11R18; %Cyclotomic extensions
        11R27  %Units and factorization
}
\keywords{periodic continued fractions,
    $\mathbb{Z}_2$-extension,
    the generalized Pell equation
}
\date{\today}
\begin{document}
\maketitle

\begin{abstract}
    In 2021, Brock, Elkies, and Jordan generalized the theory of periodic continued fractions (PCFs) over $\mathbb{Z}$ to the ring of integers in a number field.
    In particular, they considered the case where the number field is an intermediate field of the $\mathbb{Z}_2$-extension over $\mathbb{Q}$ and asked whether a $(N, \ell)$-type PCF for $X_n = 2\cos(2\pi/2^{n+2})$ exists.
    In this paper, we construct $(1,2)$ and $(0,3)$-type PCFs for $X_n$ for all $n\geq1$. To the best of our knowledge, this is the first explicit construction of type (0,3) continued fractions for all $n\geq1$.
    To obtain such results, for each type, we construct a bijection between a certain subset of the group of relative units in each layer of the $\mathbb{Z}_2$-extension and the set of PCFs for $X_n$.
    While our result confirms the existence of such PCFs for all $n\geq1$ in types $(1,2)$ and $(0,3)$, determining all PCFs remains an open problem.
    The bijections constructed in our result translate this problem into the study of the subsets of the relative units.
    As a second main result, we give explicit bounds for the logarithms of the relative units corresponding to $(1,2)$ or $(0,3)$-type PCFs for $X_n$.
    These bounds allow us to explain interesting phenomena observed in the distribution of such points.
\end{abstract}

\section{Introduction}\label{introduction}

Let $(a_n)_{n\geq 0}$ be a sequence of integers such that $a_i \geq 1$ for $i \geq 1$. 
We write an infinite regular continued fraction as
\begin{align}\label{notationofRCF}
    [a_0, a_1, a_2 \dots] = a_0 + \displaystyle\frac{1}{a_1 + \displaystyle\frac{1}{a_2 + \dots}}.
\end{align}
If there exist $N\in\Z_{\geq 0}$ and $\ell\in\Z_{\geq 1}$ such that $a_{n+\ell}=a_{n}$ for all $n\geq N$, we call (\ref{notationofRCF}) a periodic regular continued fraction and denote it by
\begin{align}\label{periodicnotation}
    P 
    &:= [a_0, \dots ,a_{N-1}, \overline{a_{N}, \dots, a_{N+\ell-1}}]\notag\\
    &= [a_0, \dots, a_{N-1}, a_{N}, \dots ,a_{N+\ell-1}, a_{N}, \dots ,a_{N+\ell-1}, \dots].
\end{align}
In particular,
we say that $P$ has type $(N, \ell)$ and period $\ell$ if $N$ and $\ell$ are both minimal.
It is well-known that every quadratic irrational number has a unique representation as a periodic regular continued fraction.

In this paper, 
we generalize $a_n$ in (\ref{periodicnotation}) from $\Z$ to $\Z[X_{n}]$ where $X_n = 2\cos(2\pi/2^{n+2})$, and call it $\Z[X_{n}]$-PCFs.
We note that $\B_{n} = \Q(X_n)$ is the $n$-th layer of the $\Z_2$-extension over $\Q$ and the ring of integers of $\B_n$ is $\Z[X_n]$.
Recently, Brock--Elkies--Jordan~\cite{BrockElkiesJordan2021} asked the following problem.
\begin{problem}(cf.\ \cite[Problem 1.1]{BrockElkiesJordan2021})\label{BEJproblem}
    For every $n, \ell \in \Z_{\geq 1}$ and $N\in \Z_{\geq 0}$,
    find $\Z[X_{n-1}]$-PCFs of type $(N,\ell)$ for $X_{n}$.
\end{problem}
Brock--Elkies--Jordan solved this problem for some types in $n=1$ and $2$ (cf. \cite[p.381, Table 1.]{BrockElkiesJordan2021}).
Moreover, they showed that there is no $\Z[X_{n-1}]$-PCFs of type $(0,1), (0,2)$, and $(1,1)$ for all $n\in \Z_{\geq 2}$ (\cite[Proposition 5.1, 5.2 (b), 5.3 (b)]{BrockElkiesJordan2021}). 
Although it seems hard to find $\Z[X_{n-1}]$-PCFs for $n\in \Z_{\geq 3}$,
the second author~\cite{Yoshizaki2023} introduced a new continued fraction expansion algorithm and gave a $\Z[X_{n-1}]$-PCF for each $n\in\Z_{\geq 1}$ when $(N,\ell) = (1,2)$.
% the second author found $\Z[X_{n}]$-PCFs of type $(1,2)$ for $X_{n+1}$ for all $n\in \Z_{\geq 0}$ by introducing a new continued fraction expansion algorithm~\cite[Theorem 3.4]{Yoshizaki2023}.

In this paper, we obtain the following theorem. 
\begin{theorem}\label{maintheorem1} (\cref{maintheorem1_explicit})
    For each $n\in\Z_{\geq 2}$, 
    we obtain $\Z[X_{n-1}]$-PCFs of type $(1,2)$ and $(0,3)$ for $X_{n}$.
\end{theorem}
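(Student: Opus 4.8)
The plan is to exploit the quadratic relation $X_n^2 = X_{n-1}+2$, so that $\B_n = \B_{n-1}(X_n)$ is the quadratic extension with $\s(X_n) = -X_n$ for the nontrivial $\s \in \G(\B_n/\B_{n-1})$, and $\mathbb{Z}[X_n] = \mathbb{Z}[X_{n-1}] \oplus \mathbb{Z}[X_{n-1}]X_n$. I would then invoke the bijection established earlier between PCFs of a prescribed type for $X_n$ and a distinguished subset of the relative units of $\B_n/\B_{n-1}$: a purely periodic expansion of period $\ell$ corresponds to a product $M$ of matrices $\begin{pmatrix} a_i & 1 \\ 1 & 0\end{pmatrix}$ fixing $X_n$, whose eigenvalue $\lambda = rX_n + s$ is a relative unit with $\N_{\B_n/\B_{n-1}}(\lambda) = \lambda\,\s(\lambda) = \det M = (-1)^\ell$, while a type $(1,\ell)$ expansion is obtained from such data after one preliminary step. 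Thus type $(1,2)$ corresponds to relative units of norm $+1$ and type $(0,3)$ to relative units of norm $-1$, each constrained to lie in the subset cut out by the reducedness inequalities of the algorithm. The theorem then reduces to exhibiting, for every $n \ge 2$, a relative unit of each sign in the appropriate subset.

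For type $(1,2)$ I would solve the fixed-point equation directly: writing $X_n = [a_0, \overline{a_1, a_2}]$ and reducing modulo $X_n^2 = X_{n-1}+2$ forces $a_2 = 2a_0$ and $a_1 = 2a_0/(X_{n-1}+2-a_0^2)$. Taking $a_0 = 1$ (the correct value since $X_n \in (1,2)$ for all $n$) this becomes $a_1 = 2/(X_{n-1}+1)$, and the key point is that $X_{n-1}+1 = \z^{-1}(\z^3-1)/(\z-1)$ with $\z = \z_{2^{n+1}}$ is a cyclotomic unit, so $a_1 = 2(X_{n-1}+1)^{-1} \in \mathbb{Z}[X_{n-1}]$. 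This yields the explicit expansion $X_n = [1, \overline{\,2(X_{n-1}+1)^{-1},\, 2\,}]$, whose period matrix has determinant $+1$; it then remains to check the reducedness and minimality conditions at every real embedding so that the type is genuinely $(1,2)$.

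The type $(0,3)$ case is the crux. The same elimination applied to $X_n = [\overline{a_0, a_1, a_2}]$ collapses the period-$3$ fixed-point equations to a single generalized Pell equation $a_0^2 - (X_{n-1}+2)a_2^2 = X_{n-1}+1$ together with $a_1 = (a_0+a_2)/(1-a_0 a_2)$; equivalently, the eigenvalue of the period matrix is a relative unit of norm $-1$. Everything therefore hinges on the solvability of $e^2 - (X_{n-1}+2)f^2 = -1$ over $\mathbb{Z}[X_{n-1}]$ for all $n \ge 2$, i.e. that the fundamental relative unit of $\B_n/\B_{n-1}$ has norm $-1$. I would prove this from the structure of the $\Z_2$-tower: $X_{n-1}+2$ is totally positive, so $\B_n/\B_{n-1}$ is unramified at every infinite place and, lying inside $\Q(\z_{2^{n+2}})$, ramified only at the unique prime above $2$; by the Hasse norm theorem $-1$ is a global norm once it is a local norm at that prime, which I would verify by a direct $2$-adic computation, and then upgrade to a norm-$(-1)$ \emph{unit} by controlling the relevant class-group contribution in this tower. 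An explicit solution can be anchored at $n=2$ by $(e,f)=(1+\sqrt2,\ \sqrt2)$, for which $e^2 - (2+\sqrt2)f^2 = -1$, and transported upward using the cyclotomic units $1+X_m$, whose relative norms satisfy $\N_{\B_m/\B_{m-1}}(1+X_m) = -(1+X_{m-1})$.

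The main obstacle is this last step: unlike the positive Pell equation, the negative one is not automatically solvable, so the whole argument rests on showing uniformly in $n$ that the fundamental relative unit has norm $-1$. A concrete manifestation of the difficulty is that the solutions grow rapidly in height—already for $n=3$ there is no solution with $f \in \{1,\ \sqrt2,\ X_2,\ 1+\sqrt2\}$—so a naive closed form is unavailable and the recursive/tower construction together with the local-global input appears unavoidable. Finally, for both types one must verify the reducedness inequalities at all conjugates, so that the constructed units indeed lie in the subsets matched by the bijection to genuine type-$(1,2)$ and type-$(0,3)$ PCFs and the periods are minimal; this totally-real bookkeeping is routine in spirit but must be carried out uniformly in $n$.
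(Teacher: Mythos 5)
Your overall framework coincides with the paper's: \cref{keytheorem_12} and \cref{keytheorem_03} are exactly the bijections you invoke between $(1,2)$-type (resp.\ $(0,3)$-type) PCFs for $X_n$ and relative units of norm $+1$ (resp.\ $-1$) subject to extra conditions, and your $(1,2)$-type construction does succeed: $1+X_{n-1}$ is a cyclotomic unit, so $q:=2/(1+X_{n-1})\in\Z[X_{n-1}]$, the element $\e=(1+q)+qX_n$ lies in $RE_n^+$ with $q\mid p(\e)-1$ and $\sgn(p(\e)q(\e))=1$, and \cref{keytheorem_12} gives $X_n=[1,\overline{q,2}]$. (The paper instead takes $\delta_n=(X_n+2)/(X_n-2)$, i.e.\ $a_0=2$ and $a_1=4/(X_{n-1}-2)$; both are valid answers for type $(1,2)$.)

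The genuine gap is in the $(0,3)$ case, and it is twofold. First, what the bijection requires is not membership in $RE_n^-$ alone, nor ``reducedness inequalities at the real embeddings'', but the divisibility conditions $p(\e)\mid q(\e)-1$ and $p(\e)\mid X_n^2q(\e)-1$ together with $\sgn(p(\e)q(\e))=1$ (\cref{keytheorem_03}); a norm-$(-1)$ unit produced by an abstract existence argument need not satisfy these congruences, so even a complete solution of your negative-Pell step would not finish the proof. Second, that existence step is itself not carried out: Hasse's norm theorem can at best give $-1\in N_{n/n-1}(\B_n^{\times})$, and the passage from a global norm to the norm of a \emph{unit} is precisely the hard part---the obstruction is a class-group/cohomological one in Weber's tower, which you leave as ``controlling the relevant class-group contribution''; moreover your transport mechanism fails as stated, since $N_{m/m-1}(1+X_m)=-(1+X_{m-1})$ is $-1$ times a nontrivial unit, so products of the $1+X_m$ and their conjugates do not land in $RE_m^-$. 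The paper disposes of both difficulties at once by exhibiting the explicit Morisawa--Okazaki unit $\eta_n=1+\sum_{k=1}^{2^n-1}2\cos(k\pi/2^{n+1})$: a direct trace computation (\cref{propertyOfUnits}) shows $N_{n/n-1}(\eta_n)=-1$ unconditionally, and the structural identity $p(\eta_n)=\eta_{n-1}$, which is a unit of $\Z[X_{n-1}]$, makes the divisibility conditions trivial, whence \cref{keytheorem_03} yields the $(0,3)$ expansion of \cref{maintheorem1_explicit}. In particular, your remark that ``a naive closed form is unavailable'' is false: $(p,q)=\bigl(\eta_{n-1},(\eta_n-\eta_{n-1})/X_n\bigr)$ solves $p^2-X_n^2q^2=-1$ in closed form for every $n\geq 1$.
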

In other words, 
we answer to \cref{BEJproblem} for all $n\in \Z_{\geq 2}$ when $(N,\ell) = (1,2)$ and $(0,3)$.
Remark that a $\Z[X_{n-1}]$-PCF of type $(N,\ell)$ for $X_{n}$ is not unique.
In fact, our $\Z[X_{n-1}]$-PCFs of type $(1,2)$ are different from ones which the second author~\cite{Yoshizaki2023} obtained.
To prove this theorem,
we give a bijection $P^{1,2}_{n}$ (resp.\ $P^{0,3}_{n}$) between the set $RE_{n}^{+}(1,2)$ (resp. $RE_{n}^{-}(0,3)$) of certain units in $\Z[X_{n}]$ and the set of all $\Z[X_{n-1}]$-PCFs of type $(1,2)$ (resp.\ $(0,3)$) for $X_{n}$ (for more details, see \Cref{key_theorem}).

When $(N, \ell)=(1,2)$ and $(0,3)$,
Brock--Elkies--Jordan did not only solve \cref{BEJproblem} but also determined all $\Z[X_1]$-PCFs for $X_{2}$~\cite[Corollary 6.8 and Corollary 8.19]{BrockElkiesJordan2021}.
They further showed that the number of $\Z[X_{n-1}]$-PCFs for $X_n$ is finite for each $n\geq3$ by using Siegel's theorem (\cite[Theorem 6.3 and Theorem 8.1 (b)]{BrockElkiesJordan2021}).
However, to the best of our knowledge, there may be no results which determine all $\Z[X_{n-1}]$-PCFs for $X_n$ for $n\geq 3$.
In order to attack this problem, we try to determine the set $RE_n^+(1,2)$ and $RE_n^-(0,3)$.
We note that $RE_n^+(1,2)$ and $RE_n^-(0,3)$ can be embedded into a complete lattice.
Indeed, both $RE_n^+(1,2)$ and $RE_n^-(0,3)$ are subsets of $RE_n$ where we set $RE_n=N_{n/n-1}^{-1}(\{\pm 1\})$ and $N_{n/n-1}:\Z[X_n]\to \Z[X_{n-1}]$ is the relative norm map (cf.~\cref{keytheorem_12} and \cref{keytheorem_03}).
Moreover we can regard $RE_n$ as a complete lattice in $\R^{2^{n-1}}$ via the map\footnote{Here we expand the domain of $\s$ to $\B_n$ by $\s(X_n)=\sqrt{2+\s(X_{n-1})}$ for each $\s \in \Gal(\B_{n-1}/\Q)$.}
\begin{equation*}
    l_n:RE_n\to \R^{2^{n-1}};\e \mapsto (\log|\s(\e)|)_{\s \in \Gal(\B_{n-1}/\Q)},
\end{equation*}
Thus the problem of determining $RE_n^+(1,2)$ and $RE_n^-(0,3)$ is reduced to the problem of determining certain points in the lattice $l_n(RE_n)$.
Since it seems also difficult, we consider the following problem.
\begin{problem}
    Investigating the distributions of $l_n(RE_n^+(1,2))$ and $l_n(RE_n^-(0,3))$ in $l_n(RE_n)$.
\end{problem}

In the case of $n=2$ and $(N,\ell)=(1,2)$, we plot $l_2(RE_2^+(1,2))$ in $l_2(RE_2)$ (\Cref{plot12}) .
Here the symbol $\bullet$ means that corresponding $\e$ is included in $RE_2^+(1,2)$, and the symbol of a circled $\bullet$ means that both $\e$ and $-\e$ are included in $RE_2^+(1,2)$.
% We note that $\eta_2$ in the figure is an explicit generator of $RE_2^+$ (resp. $RE_2$) as a $\Z[\Gal(\B_2/\Q)]$-module.
\begin{figure}[h]
  \centering
  \begin{minipage}[b]{0.49\columnwidth}
      \centering
      \includegraphics[width=0.9\columnwidth]{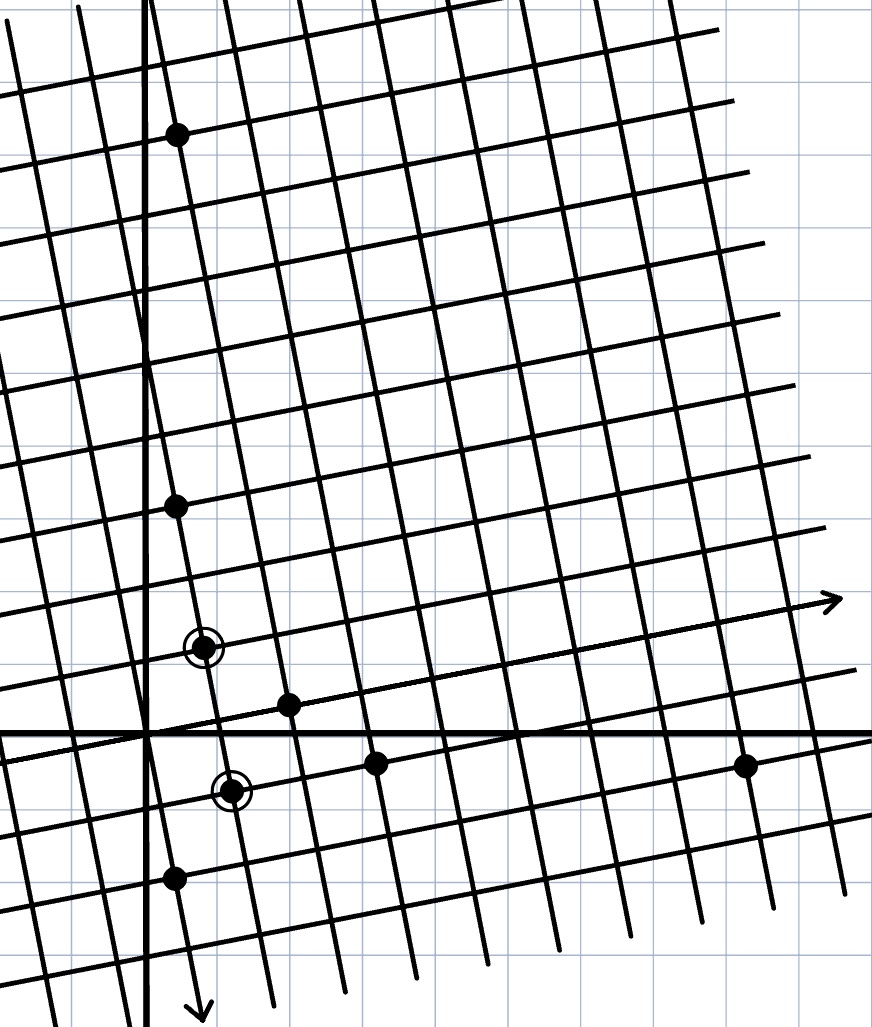}
      \caption{$l_2(RE_2^+(1,2))$}\label{plot12}
  \end{minipage}
  \begin{minipage}[b]{0.49\columnwidth}
    %   \centering
    %   \includegraphics[width=0.9\columnwidth]{REminus.jpeg}
    %   \caption{$l_2(RE_2^-(0,3))$}\label{plot03}
        \centering
        \includegraphics[width=0.9\columnwidth]{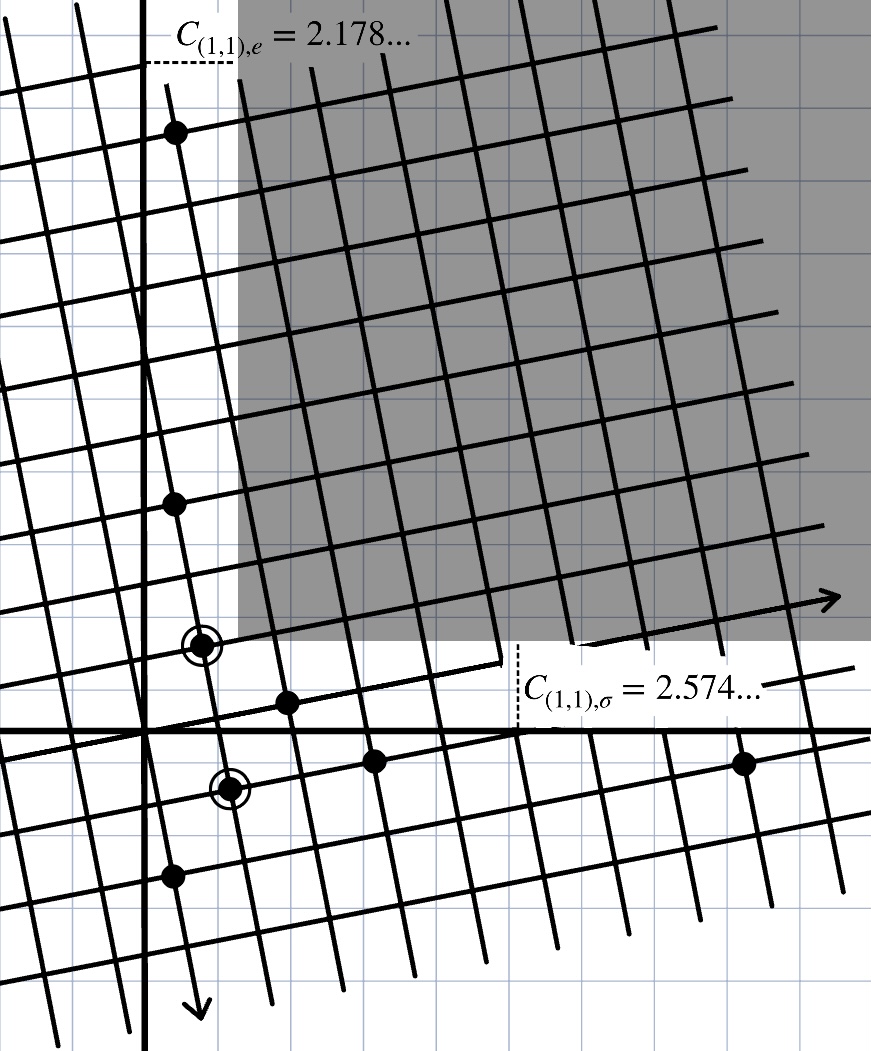}
        \caption{$l_2(RE_2^+(1,2))$}\label{REwithShade}
  \end{minipage}
\end{figure}
We see that the points of $l_2(RE_2^+(1,2))$ are located near the $x$ or $y$-axis in \Cref{plot12}.
We note that the similar phenomena occurs in the case of $(N,\ell)=(0,3)$.

This phenomena can be explained by our second main theorem below.
\begin{theorem}\label{maintheorem2} (\cref{maintheorem2_explicit_12} and \cref{maintheorem2_explicit_03})
    We fix $n\in \Z_{\geq 1}$.
    For each $\s \in \Gal(\B_{n-1}/\Q)$ and $s\in \{1, -1\}^{2^{n-1}}$ there exists a computable value $C_{s, \s}$ such that
    if $\e \in RE_n^+(1,2)$ then $\log|\s(\e)|<C_{s,\s}$ for some $\s \in \Gal(\B_{n-1}/\Q)$ and $s \in \{1, -1\}^{2^{n-1}}$.
    We also obtain a similar result for $(0,3)$-type.
\end{theorem}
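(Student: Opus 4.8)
The plan is to start from the explicit bijection of \cref{key_theorem}, under which every $\e\in RE_n^+(1,2)$ takes the shape $\e=(X_n+a_0)/(X_n-a_0)$, where $a_0\in\Z[X_{n-1}]$ is the leading partial quotient of the associated PCF (the period being governed by $a_2=2a_0$ and $a_1=2a_0/(X_{n-1}+2-a_0^2)$). Since $N_{n/n-1}(\e)=1$, and the nontrivial automorphism of $\B_n/\B_{n-1}$ sends $X_n\mapsto -X_n$, each $\s\in\Gal(\B_{n-1}/\Q)$ extends to $\B_n$ via $\s(X_n)=\sqrt{2+\s(X_{n-1})}\in(0,2)$, and I would record the basic identity
\begin{equation*}
    \log|\s(\e)| = \log\bigl|\s(X_n)+\s(a_0)\bigr| - \log\bigl|\s(X_n)-\s(a_0)\bigr|.
\end{equation*}
The sign vector $s=(\sgn\s(\e))_{\s}\in\{1,-1\}^{2^{n-1}}$ records the position of each conjugate $\s(a_0)$ relative to $\pm\s(X_n)$ and so fixes the branch in which this quantity is estimated.

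The core of the argument rests on the element $d:=a_0^2-X_{n-1}-2=-N_{n/n-1}(X_n-a_0)\in\Z[X_{n-1}]$, which is nonzero because $X_n\notin\B_{n-1}$. Consequently $\prod_{\s}|\s(d)|=|N_{\B_{n-1}/\Q}(d)|$ is a positive rational integer, so $|\s_0(d)|\geq 1$ for at least one $\s_0\in\Gal(\B_{n-1}/\Q)$. For that $\s_0$, the elementary constraint $|\s_0(a_0)^2-\s_0(X_n)^2|=|\s_0(d)|\geq 1$ together with $0<\s_0(X_n)<2$ bounds the ratio $|\s_0(X_n)+\s_0(a_0)|/|\s_0(X_n)-\s_0(a_0)|$ by a computable absolute constant: when $|\s_0(a_0)|$ is large the numerator and denominator are comparable, and when $\s_0(a_0)$ is near $\pm\s_0(X_n)$ the lower bound on $|\s_0(d)|$ keeps the denominator away from $0$, so in every regime $\log|\s_0(\e)|<C$. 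This already yields the asserted upper bound for some $\s$ and $s$; moreover it is precisely this dichotomy — the distinguished embedding makes $|\mathrm{id}(d)|$ small for large units, which forces the remaining conjugates of $d$ to be large and hence confines $\log|\s(\e)|$ to a bounded strip — that explains why the points of $l_n(RE_n^+(1,2))$ accumulate near the coordinate axes.

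The main work, rather than the existence of a bound, is to replace the crude absolute constant by the sharp, sign-sensitive values $C_{s,\s}$ that quantitatively match the observed distribution: here I would feed the actual admissibility conditions defining $RE_n^+(1,2)$ (the regular-CF positivity $a_1,a_2\geq 1$ in the distinguished embedding, together with reducedness of the periodic tail) into the estimate of $|\s(X_n)\pm\s(a_0)|$ branch by branch, using the tight coupling $\overline{X_n}=-X_n$ between the two extensions of each $\s$ to convert the global norm identity $\prod_\s|\s(d)|=|N_{\B_{n-1}/\Q}(d)|$ into per-embedding bounds. I expect this branch-by-branch bookkeeping, and the verification that the closed forms from \cref{key_theorem} hold in each sign class, to be the most delicate step. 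The type $(0,3)$ case proceeds identically through \cref{keytheorem_03}: there $RE_n^-(0,3)$ consists of relative units of norm $N_{n/n-1}(\e)=-1$ attached to a purely periodic length-three expansion of $X_n$, the bijection again expresses the partial quotients as explicit rational functions of $\e$ and $X_n$, and the same nonvanishing-norm argument for the corresponding analogue of $d$ produces the constants, the only extra subtlety coming from the odd period, which flips the sign of the relative norm without altering the structure of the estimate.
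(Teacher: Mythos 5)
Your core argument for the $(1,2)$ case is correct, and it is genuinely different from the paper's. The identity $\e=(X_n+a_0)/(X_n-a_0)$ with $a_0=(p(\e)-1)/q(\e)$ does hold (from $p^2-X_n^2q^2=1$ and $p=a_0q+1$ one gets $q(X_n^2-a_0^2)=2a_0$), and your $d=a_0^2-X_n^2=N_{n/n-1}(X_n-a_0)$ (note the sign: $+N$, not $-N$) is a nonzero element of $\Z[X_{n-1}]$, so $|N_{\B_{n-1}/\Q}(d)|\geq 1$ and $|\s_0(d)|\geq 1$ for at least one $\s_0$; your two-regime estimate then bounds $|\log|\s_0(\e)||$ by an absolute computable constant, uniformly in $n$. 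The paper instead argues by contraposition and simultaneous approximation: if $|\log|\s(\e)||>C_{s,\s}$ for \emph{every} $\s$, then every coordinate of $\mu_n((p-1)/q)$ lies within $|s_\s\s(X_n)-\s(a_s)|$ of $s_\s\s(X_n)$, so this lattice point would be strictly closer to $\mu_s(X_n)$ than the nearest lattice point $\mu_n(a_s)$ --- a contradiction with $q\mid p-1$. Your route buys a cleaner and uniform bound; the paper's route is what produces the sign- and embedding-sensitive constants $C_{s,\s}$ of \cref{maintheorem2_explicit_12}, calibrated exactly to the nearest-lattice-point distance, which is the quantitative content behind the figures.

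There are, however, two genuine gaps. First, the $(0,3)$ case does \emph{not} ``proceed identically,'' and this is not bookkeeping: the representation $\e=(X_n+a_0)/(X_n-a_0)$ forces $N_{n/n-1}(\e)=+1$ (numerator and denominator have equal relative norms), so no element of $RE_n^-$ admits it and your $d$ has no direct analogue. The mechanism for $(0,3)$ is in fact the opposite of yours: since $a_1=(X_n^2q-1)/p$ and $a_3=(q-1)/p$ lie in $\Z[X_{n-1}]$, so does $a_1-X_n^2a_3=(X_{n-1}+1)/p$; as $X_{n-1}+1$ is a unit of $\Z[X_{n-1}]$, this forces $p(\e)$ to be a unit, hence $|\s_0(p)|\leq 1$ for some $\s_0$, and then $|\s_0(X_n)\s_0(q)|=\sqrt{\s_0(p)^2+1}\in[1,\sqrt{2}]$ gives $|\log|\s_0(\e)||\leq\log(1+\sqrt{2})$. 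So a bound exists, but via ``norm exactly $\pm1$ forces a small conjugate'' rather than your ``norm at least $1$ forces a large conjugate''; your proposal asserts the transfer without supplying this. Second, your plan for recovering the sharp constants invokes ``regular-CF positivity $a_1,a_2\geq 1$'' and ``reducedness of the periodic tail'': no such conditions exist for PCFs over $\Z[X_{n-1}]$ --- the sets $RE_n^+(1,2)$ and $RE_n^-(0,3)$ are cut out only by divisibility and the single sign condition $\sgn(p(\e)q(\e))=1$ --- and the paper's constants come from the nearest-lattice-point element $a_s$, an object your proposal never identifies.
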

For simplicity, we only describe the result of applying \cref{maintheorem2} to the first quadrant.
Then we obtain that there is no $l_2(RE_2^+(1,2))$ element in the shaded area in \Cref{REwithShade}.
Here $e$ in \Cref{REwithShade} denote the unit element of $\Gal(\B_{n-1}/\Q)$ and $\s$ in \Cref{REwithShade} denote the conjugation $2\cos(2\pi/2^3)\mapsto 2\cos(3\times2\pi/2^3)$.

In \cite{Yoshizaki2023} the second author pointed out that there is an interesting relationship between certain $\Z[X_{n-1}]$-PCF of type $(1,2)$ for $X_{n}$ and a generalization of the Pell equation (see \Cref{rel_to_pell_eq}).
In the last section, we find a similar relationship between the new $\Z[X_{n-1}]$-PCFs of type $(0,3)$ for $X_{n}$ given in \cref{maintheorem1} and the generalized Pell equation (\cref{fundsolofXn}).

The plan of this paper is as follows.
In \Cref{preliminaries}, we recall some notations and properties which we need in the proof of main theorems.
In \Cref{key_theorem}, we give key theorems to prove \cref{maintheorem1,maintheorem2}. 
In \Cref{main_thm_1} (resp.\ \ref{main_thm_2}), we state \cref{maintheorem1} (resp.\ \cref{maintheorem2}) explicitly and give a proof.
In \Cref{rel_to_pell_eq}, we describe the relationship between a $\Z[X_{n-1}]$-PCF of type $(0,3)$ for $X_{n}$ and the generalized Pell equation.

\section{Preliminaries}\label{preliminaries}

In this section we prepare some notations and propeties.

\subsection{The $\Z_2$-extension over the rationals}

Let $\B_{\infty}$ be the $\Z_2$-extension over $\Q$, that is, the infinite Galois extension over $\Q$ whose Galois group is isomorphic to the group of $2$-adic integers $\Z_2$ as a topological group.
We set $\B_n=\Q(X_n)$.
Recall that $X_n = X_n = 2\cos(2\pi/2^{n+2})$.
For example,
\[
    X_{1} = \sqrt{2},\quad X_{2} = \sqrt{2+\sqrt{2}},\quad X_{3} = \sqrt{2+\sqrt{2+\sqrt{2}}}, \dots.
\]
Then we have $\G(\B_n/\Q)\cong\Z/2^n\Z$ and $\B_{\infty}=\cup_n \B_n$.
We set $\zeta_{2^n}=\exp(2\pi\sqrt{-1}/2^n)$.
Since $X_n=\zeta_{2^{n+2}}+\zeta_{2^{n+2}}^{-1}$, the ring of integers of $\B_n$ is $\Z[X_n]$ (see \cite[Proposition 2.16]{Washington}).
Let $N_n:\Z[X_n]\to \Z;x \mapsto \prod_{\s \in \G(\B_n/\Q)}\s(x)$ denote the norm map of $\B_n$.
Let $\tau_n$ denote the generator of $\G(\B_n/\B_{n-1})\cong \Z/2\Z$, and $N_{n/n-1}:\Z[X_n]\to\Z[X_{n-1}];x\mapsto x\tau_n(x)$ be the relative norm map.
We note that for every $x \in \Z[X_n]$ there exist unique elements $p, q \in \Z[X_{n-1}]$ such that $x=p+X_nq$, and we have $N_{n/n-1}(x)=p^2-X_n^2q^2$.
In this article we write $x=p(x)+X_nq(x)$.
We set
\begin{align*}
    RE_n^{-}&=\{\e \in \Z[X_n] \mid N_{n/n-1}(\e)=-1\}, \\
    RE_n^{+}&=\{\e \in \Z[X_n] \mid N_{n/n-1}(\e)=1\},
\end{align*}
and $RE_n = RE^{-}_n \cup RE^{+}_n$.
Note that $RE_n^{+}$ and $RE_n$ are subgroups of the unit group $\Z[X_n]^{\times}$.

\subsection{Periodic continued fraction over $\Z[X_{n-1}]$}

Suppose $c_i \in \Z[X_{n-1}]$ for $i\in \Z_{\geq 0}$.
In a similar manner to a regular continued fraction, 
we define a finite continued fraction $[c_0, \dots, c_m]$ and a $(N, \ell)$-type periodic continued fraction $[c_0,\dots, c_{N-1}, \overline{c_{N},\dots, c_{N+\ell-1}}]$.
The $k$-th convergent $p_k/q_k$ of a periodic continued fraction is defined by
\begin{align*}
    \begin{pmatrix}
        p_k & p_{k-1}\\
        q_k & q_{k-1}
    \end{pmatrix}
    = 
    \begin{pmatrix}
        c_0 & 1\\
        1 & 0
    \end{pmatrix}\begin{pmatrix}
        c_1 & 1\\
        1 & 0
    \end{pmatrix}\dots\begin{pmatrix}
        c_k & 1\\
        1 & 0
    \end{pmatrix}
\end{align*}
for $k\in \Z_{\geq 0}$.
Then we have
\begin{align}\label{property:determinant}
    p_{k}q_{k-1} - p_{k-1}q_{k} = (-1)^{k+1},
\end{align}
and
\begin{align}\label{property:convergent}
    \frac{p_k}{q_k} = [c_0, \dots, c_k]
\end{align}
for $k\in\Z_{\geq 1}$ (cf.\ \cite[p.385 (8) and p.386 (9)]{BrockElkiesJordan2021}).

For a matrix $A = \displaystyle\begin{pmatrix}a & b\\ c & d\end{pmatrix} \in GL_2(\C)$,
we regard $A$ as the automorphism of the projective line $\PP^1$ over $\C$ by
\[
    z \mapsto \frac{az+b}{cz+d}
\]
for $z\in\PP^1(\C)$.\footnote{Recall that we define $\frac{a\infty+b}{c\infty+d} = \frac{a}{c}$.}
By using this automorphism, 
we regard a finite continued fraction $[c_0, \dots, c_m]$ as
\[
    \begin{pmatrix}
        c_0 & 1\\
        1 & 0
    \end{pmatrix}\begin{pmatrix}
        c_1 & 1\\
        1 & 0
    \end{pmatrix}\dots\begin{pmatrix}
        c_m & 1\\
        1 & 0
    \end{pmatrix}\infty.
\]

\begin{lemma}\label{lemmaforpelleq}
    Suppose $\alpha\in\Z[X_{n}]\setminus\Z[X_{n-1}]$ and let
    \[
        X^2+bX+c\in\Z[X_{n-1}][X].
    \]
    be the minimal polynomial of $\alpha$.
    If $A\in GL_{2}(\Z[X_{n-1}])$ satisfies $A \alpha = \alpha$,
    there exist $x, y \in \Z[X_{n-1}]$ such that
    \begin{align*}
        A = \begin{pmatrix}
            \frac{x-by}{2} & -cy\\
            y & \frac{x+by}{2}
        \end{pmatrix}.
    \end{align*}
\end{lemma}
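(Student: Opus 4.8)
The plan is to write $A=\begin{pmatrix} A_{11} & A_{12}\\ A_{21} & A_{22}\end{pmatrix}$ with all $A_{ij}\in\Z[X_{n-1}]$, and to turn the fixed-point condition $A\alpha=\alpha$ into a polynomial identity satisfied by $\alpha$. Since $\alpha$ is a finite algebraic number and $A\alpha=\alpha$, the denominator $A_{21}\alpha+A_{22}$ must be nonzero (otherwise $A\alpha=\infty\neq\alpha$), so from $\frac{A_{11}\alpha+A_{12}}{A_{21}\alpha+A_{22}}=\alpha$ I may clear denominators to obtain
\[
    A_{21}\alpha^2+(A_{22}-A_{11})\alpha-A_{12}=0 .
\]
Using the minimal relation $\alpha^2=-b\alpha-c$ to eliminate $\alpha^2$, this rewrites as a $\B_{n-1}$-linear relation in $1$ and $\alpha$:
\[
    \bigl(A_{22}-A_{11}-bA_{21}\bigr)\alpha-\bigl(A_{12}+cA_{21}\bigr)=0 .
\]

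The key structural input is that $\{1,\alpha\}$ is linearly independent over $\B_{n-1}$. This holds because $\alpha\in\Z[X_n]\setminus\Z[X_{n-1}]$ together with $\Z[X_n]\cap\B_{n-1}=\Z[X_{n-1}]$ (as $\Z[X_{n-1}]$ is the full ring of integers of $\B_{n-1}$) forces $\alpha\notin\B_{n-1}$. Equating the coefficients of $\alpha$ and of $1$ to zero then yields exactly the two relations
\[
    A_{12}=-c\,A_{21},\qquad A_{11}=A_{22}-b\,A_{21}.
\]

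Finally I set $y:=A_{21}$ and $x:=A_{11}+A_{22}$, both of which lie in $\Z[X_{n-1}]$. The first relation says the upper-right entry is $-cy$, and the lower-left entry is $y$ by definition. The second relation gives $A_{22}-A_{11}=by$, which combined with $A_{11}+A_{22}=x$ solves to $A_{11}=\tfrac{x-by}{2}$ and $A_{22}=\tfrac{x+by}{2}$; the apparent denominators are harmless because these expressions are, by construction, equal to the original entries $A_{11},A_{22}\in\Z[X_{n-1}]$. This exhibits $A$ in the asserted form. I expect the only point needing genuine care to be the linear independence of $\{1,\alpha\}$, i.e.\ confirming $\alpha\notin\B_{n-1}$, since that is precisely what licenses separating the two coefficients; once this is in place the remainder is a direct, purely algebraic substitution, and the degenerate case $A_{21}=0$ requires no separate treatment as it is subsumed by the same coefficient comparison.
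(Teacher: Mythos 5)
Your proof is correct and follows essentially the same route as the paper: clear denominators in $A\alpha=\alpha$ to get $A_{21}\alpha^2+(A_{22}-A_{11})\alpha-A_{12}=0$, exploit that the minimal polynomial of $\alpha$ over $\B_{n-1}$ has degree $2$ to extract the relations $A_{12}=-cA_{21}$ and $A_{22}-A_{11}=bA_{21}$, then set $y=A_{21}$ and $x=A_{11}+A_{22}$. The only cosmetic difference is that the paper phrases the key step as divisibility of the quadratic $uX^2+(v-s)X-t$ by the minimal polynomial, whereas you reduce $\alpha^2$ modulo the minimal relation and invoke linear independence of $\{1,\alpha\}$ over $\B_{n-1}$ — equivalent arguments, and your explicit treatment of the nonvanishing denominator and the case $A_{21}=0$ is a slight bonus in rigor.
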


\begin{proof}
    Suppose $A = \displaystyle\begin{pmatrix}s & t\\ u & v\end{pmatrix} \in \mathrm{GL}_2(\Z[X_{n-1}])$ satisfies $A\alpha = \alpha$.
    Then, we obtain
    \begin{align*}
        &\frac{s\alpha + t}{u\alpha + v} = \alpha,\\
        &s\alpha + t = u\alpha^2 + v\alpha,\\
        &u\alpha^2 + (v-s)\alpha - t = 0,
    \end{align*}
    and $\alpha$ is a root of the polynomial $uX^2 + (v-s)X - t = 0\in \Z[X_{n-1}][X]$. 
    Since $\alpha\in\Z[X_{n}]\setminus\Z[X_{n-1}]$,
    there exists $y\in\Z[X_{n-1}][X]$ such that
    \begin{align*}
        uX^2 + (v-s)X - t = y(X^2 + bX + c),
    \end{align*}
    and we obtain
    \begin{align*}
        u = y, \quad v-s = by, \quad \text{and} \quad -t=cy.
    \end{align*}
    Setting $x = 2v - bu \in \Z[X_{n-1}]$,
    we have
    \begin{align*}
        A = \begin{pmatrix} 
            \frac{x-by}{2} & -cy\\
            y & \frac{x+by}{2}
        \end{pmatrix}.
    \end{align*}
\end{proof}

\begin{proposition}\label{1elltypetoPelleq}
    Let
    \begin{equation}\label{PCFrepresentaiton}
        X_{n}=[a_0,\overline{a_1,...,a_{\ell}}]
    \end{equation}
    be a $(1,\ell)$-type PCF over $\Z[X_{n-1}]$.
    Then we have
    \begin{equation*}
        p_{\ell-1}^2-X_{n}^2q_{\ell-1}^2=(-1)^{\ell}.
    \end{equation*}
\end{proposition}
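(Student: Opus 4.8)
The plan is to exploit the periodicity of \eqref{PCFrepresentaiton} to realize $X_{n}$ as a fixed point of a single matrix in $GL_2(\Z[X_{n-1}])$, and then to read off the shape of that matrix from \cref{lemmaforpelleq}. Write
\[
    A=\begin{pmatrix} a_0 & 1\\ 1 & 0\end{pmatrix},\qquad
    B=\begin{pmatrix} a_1 & 1\\ 1 & 0\end{pmatrix}\cdots\begin{pmatrix} a_\ell & 1\\ 1 & 0\end{pmatrix},
\]
so that the defining matrix product for the convergents gives $AB=\begin{pmatrix} p_\ell & p_{\ell-1}\\ q_\ell & q_{\ell-1}\end{pmatrix}$. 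Let $\xi=[\overline{a_1,\dots,a_\ell}]$ be the purely periodic tail. Reading \eqref{PCFrepresentaiton} through the $\PP^1$-action, periodicity gives $\xi=B\xi$ and $X_{n}=A\xi$, hence $X_{n}$ is fixed by $C:=ABA^{-1}$. Since $A^{-1}=\begin{pmatrix} 0 & 1\\ 1 & -a_0\end{pmatrix}\in GL_2(\Z[X_{n-1}])$, we have $C\in GL_2(\Z[X_{n-1}])$, and a direct multiplication yields
\[
    C=\begin{pmatrix} p_\ell & p_{\ell-1}\\ q_\ell & q_{\ell-1}\end{pmatrix}\begin{pmatrix} 0 & 1\\ 1 & -a_0\end{pmatrix}=\begin{pmatrix} p_{\ell-1} & p_\ell-a_0p_{\ell-1}\\ q_{\ell-1} & q_\ell-a_0q_{\ell-1}\end{pmatrix},
\]
so that the first column of $C$ is exactly $(p_{\ell-1},q_{\ell-1})^{\mathsf T}$.

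Next I would pin down the minimal polynomial needed to apply \cref{lemmaforpelleq}. The double-angle identity gives $X_{n}^2=2+X_{n-1}$, so the minimal polynomial of $X_{n}\in\Z[X_n]\setminus\Z[X_{n-1}]$ over $\Z[X_{n-1}]$ is $X^2-(2+X_{n-1})=X^2-X_{n}^2$; that is, $b=0$ and $c=-X_{n}^2$ in the notation of the lemma. Applying \cref{lemmaforpelleq} to $C$, which fixes $X_{n}$, therefore produces $x,y\in\Z[X_{n-1}]$ with
\[
    C=\begin{pmatrix} x/2 & X_{n}^2y\\ y & x/2\end{pmatrix}.
\]
Matching this with the first column computed above identifies $y=q_{\ell-1}$ and $x/2=p_{\ell-1}$.

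Finally I would compute $\det C$ in two ways. From the shape just obtained, $\det C=(x/2)^2-X_{n}^2y^2=p_{\ell-1}^2-X_{n}^2q_{\ell-1}^2$. On the other hand, $\det C=\det B=(-1)^\ell$ since $B$ is a product of $\ell$ matrices each of determinant $-1$; equivalently, expanding $\det C=p_{\ell-1}q_\ell-p_\ell q_{\ell-1}$ and invoking \eqref{property:determinant} gives the same value. Equating the two expressions yields $p_{\ell-1}^2-X_{n}^2q_{\ell-1}^2=(-1)^\ell$, as claimed.

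The step I expect to require the most care is the passage from periodicity to the statement that $C$ fixes $X_{n}$: one must argue through the $\PP^1$-action that it is $X_{n}$ itself, and not its conjugate $-X_{n}$, that is fixed, while simultaneously checking that $C$ genuinely lies in $GL_2(\Z[X_{n-1}])$ so that \cref{lemmaforpelleq} applies. Once the fixed-point matrix $C$ is correctly produced in $GL_2(\Z[X_{n-1}])$ with first column $(p_{\ell-1},q_{\ell-1})^{\mathsf T}$, the lemma does the essential work and the determinant computation is immediate.
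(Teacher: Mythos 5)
Your proof is correct and takes essentially the same route as the paper: your conjugate $C=ABA^{-1}$ is precisely the matrix $M([a_0,a_1,\dots,a_\ell])\begin{pmatrix}0 & 1\\ 1 & -a_0\end{pmatrix}$ that the paper obtains from the identity $X_n=\left[a_0,a_1,\dots,a_\ell,\frac{1}{X_n-a_0}\right]$, and both arguments then invoke \cref{lemmaforpelleq} with minimal polynomial $X^2-X_n^2$ and finish by equating the two determinant computations. The only cosmetic difference is that you get $\det C=\det B=(-1)^\ell$ directly from the conjugation, whereas the paper cites \eqref{property:determinant}; these are equivalent.
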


\begin{proof}
    We have
    \begin{align*}
        X_{n}
        &= [a_{0}, \overline{a_1, \dots, a_{\ell}}]\\
        &= \left[a_{0}, a_{1}, \dots, a_{\ell}, \frac{1}{X_{n}-a_{0}}\right]\\
        &= \begin{pmatrix}
            a_0 & 1\\
            1 & 0
        \end{pmatrix}\begin{pmatrix}
            a_1 & 1\\
            1 & 0
        \end{pmatrix} \dots \begin{pmatrix}
            a_{\ell} & 1\\
            1 & 0
        \end{pmatrix}\begin{pmatrix}
            0 & 1\\
            1 & -a_{0}
        \end{pmatrix}X_{n}\\
        &= \begin{pmatrix}
            p_{\ell} & p_{\ell-1}\\
            q_{\ell} & q_{\ell-1}
        \end{pmatrix}\begin{pmatrix}
            0 & 1\\
            1 & -a_{0}
        \end{pmatrix}X_{n}.
    \end{align*}
    Then we obtain
    \begin{align}\label{matrixequation}
        \begin{pmatrix}
            p_{\ell} & p_{\ell-1}\\
            q_{\ell} & q_{\ell-1}
        \end{pmatrix}\begin{pmatrix}
            0 & 1\\
            1 & -a_{0}
        \end{pmatrix} = \begin{pmatrix}
            \frac{x}{2} & X_{n}^2y\\
            y & \frac{x}{2}
        \end{pmatrix}
    \end{align}
    for some $x,y \in \Z[X_{n-1}]$ by \cref{lemmaforpelleq}.
    Hence, we obtain $x=2p_{\ell-1}$ and $y=q_{\ell-1}$.
    By taking the determinant of both side of (\ref{matrixequation}),
    we have
    \begin{align*}
        -(p_{\ell}q_{\ell-1} - p_{\ell-1}q_{\ell})
        = \left(\frac{x}{2}\right)^2 - X_{n}^2y^2
        = p_{\ell-1}^2 - X_{n}^2q_{\ell-1}^2.
    \end{align*}
    Combining with (\ref{property:determinant}), we obtain the desired result.
\end{proof}

In a similar manner to the proof of \cref{1elltypetoPelleq}, 
we can show the following.
\begin{proposition}\label{0elltypetoPelleq}
    Let
    \begin{align*}
        X_{n}=[\overline{a_0,...,a_{\ell-1}}]
    \end{align*}
    be a $(0,\ell)$-type PCF over $\Z[X_{n-1}]$. 
    Then we have 
    \begin{equation*}
        p_{\ell-1}^2-X_{n}^2q_{\ell-1}^2=(-1)^{\ell}.
    \end{equation*}
\end{proposition}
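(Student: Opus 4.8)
The plan is to mimic the proof of \cref{1elltypetoPelleq}, adapting it to the purely periodic case. As in that proof, I would begin by unfolding one full period of the continued fraction and expressing $X_n$ as the image of $X_n$ under a product of $2\times 2$ matrices. Concretely, since $X_n = [\overline{a_0, \dots, a_{\ell-1}}]$ is purely periodic, after one period the tail continued fraction returns to $X_n$ itself, so
\begin{align*}
    X_n = \begin{pmatrix} a_0 & 1\\ 1 & 0\end{pmatrix}\begin{pmatrix} a_1 & 1\\ 1 & 0\end{pmatrix}\cdots\begin{pmatrix} a_{\ell-1} & 1\\ 1 & 0\end{pmatrix}X_n = \begin{pmatrix} p_{\ell-1} & p_{\ell-2}\\ q_{\ell-1} & q_{\ell-2}\end{pmatrix}X_n.
\end{align*}
This is in fact cleaner than the $(1,\ell)$-type case because there is no initial $a_0$ to strip off, so no extra factor $\bigl(\begin{smallmatrix}0&1\\1&-a_0\end{smallmatrix}\bigr)$ is needed; the fixed-point equation is governed directly by the convergent matrix of one period.

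Next I would apply \cref{lemmaforpelleq}. The matrix $A = \bigl(\begin{smallmatrix} p_{\ell-1} & p_{\ell-2}\\ q_{\ell-1} & q_{\ell-2}\end{smallmatrix}\bigr)$ lies in $GL_2(\Z[X_{n-1}])$ by \cref{property:determinant} and fixes $X_n \in \Z[X_n]\setminus\Z[X_{n-1}]$, whose minimal polynomial over $\Z[X_{n-1}]$ is $X^2 - X_n^2$ (so $b=0$ and $c = -X_n^2$ in the notation of the lemma). The lemma then forces
\begin{align*}
    \begin{pmatrix} p_{\ell-1} & p_{\ell-2}\\ q_{\ell-1} & q_{\ell-2}\end{pmatrix} = \begin{pmatrix} \frac{x}{2} & X_n^2 y\\ y & \frac{x}{2}\end{pmatrix}
\end{align*}
for some $x, y \in \Z[X_{n-1}]$, yielding the identifications $x = 2p_{\ell-1}$, $y = q_{\ell-1}$, and crucially $p_{\ell-2} = X_n^2 q_{\ell-1}$ together with $q_{\ell-2} = p_{\ell-1}$.

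Finally I would take determinants. On one side, the determinant of $A$ equals $p_{\ell-1}q_{\ell-2} - p_{\ell-2}q_{\ell-1}$; substituting the identifications from the lemma gives $p_{\ell-1}^2 - X_n^2 q_{\ell-1}^2$. On the other side, by \cref{property:determinant} with $k = \ell-1$, we have $p_{\ell-1}q_{\ell-2} - p_{\ell-2}q_{\ell-1} = (-1)^{\ell}$. Equating the two expressions gives $p_{\ell-1}^2 - X_n^2 q_{\ell-1}^2 = (-1)^{\ell}$, which is the claim.

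I do not expect a serious obstacle here, since the argument is structurally identical to \cref{1elltypetoPelleq}. The one point requiring a little care is the matrix-indexing bookkeeping: in the purely periodic case the relevant convergent matrix ends at index $\ell-1$ rather than $\ell$, so one must confirm that the product of the first $\ell$ partial-quotient matrices is exactly $\bigl(\begin{smallmatrix} p_{\ell-1} & p_{\ell-2}\\ q_{\ell-1} & q_{\ell-2}\end{smallmatrix}\bigr)$ and that \cref{property:determinant} is applied with the correct parity. Verifying that $X_n$ genuinely satisfies $X_n^2 - X_n^2 = 0$ as its minimal polynomial over $\Z[X_{n-1}]$ (i.e.\ that $X_n \notin \Z[X_{n-1}]$, so the polynomial is irreducible of degree $2$) is immediate since $\B_n/\B_{n-1}$ is a degree-$2$ extension.
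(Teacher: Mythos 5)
Your proof is correct and is exactly the adaptation the paper intends: the paper proves this proposition only by remarking it follows ``in a similar manner'' to \cref{1elltypetoPelleq}, and your argument carries out that adaptation faithfully — using the fixed-point equation $X_n = \bigl(\begin{smallmatrix} p_{\ell-1} & p_{\ell-2}\\ q_{\ell-1} & q_{\ell-2}\end{smallmatrix}\bigr)X_n$ (with no stripping matrix needed), then \cref{lemmaforpelleq} with $b=0$, $c=-X_n^2$, and finally determinants together with \cref{property:determinant} at $k=\ell-1$. The indexing and parity bookkeeping you flag are handled correctly.
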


% \textcolor{red}{証明は, 上の命題と同様にして行えるため, 論文では削除する予定}
% \begin{proof}
%     we have
%     \begin{align*}
%         X_{n}
%         &= [\overline{a_0, \dots, a_{\ell-1}}]\\
%         &= [a_{0}, \dots, a_{\ell-1}, \overline{a_0, \dots, a_{\ell-1}}]\\
%         &= [a_{0}, \dots, a_{\ell-1}, X_{n}]\\
%         &= \begin{pmatrix}
%             a_0 & 1\\
%             1 & 0
%         \end{pmatrix}\begin{pmatrix}
%             a_1 & 1\\
%             1 & 0
%         \end{pmatrix} \dots \begin{pmatrix}
%             a_{\ell-1} & 1\\
%             1 & 0
%         \end{pmatrix}X_{n}\\
%         &= \begin{pmatrix}
%             p_{\ell-1} & p_{\ell-2}\\
%             q_{\ell-1} & q_{\ell-2}
%         \end{pmatrix}X_{n},
%     \end{align*}
%     and
%     \begin{align*}
%         \begin{pmatrix}
%             p_{\ell-1} & p_{\ell-2}\\
%             q_{\ell-1} & q_{\ell-2}
%         \end{pmatrix} = \begin{pmatrix}
%             \frac{x}{2} & X_{n}^2y\\
%             y & \frac{x}{2}
%         \end{pmatrix}
%     \end{align*}
%     for some $x,y \in \Z[X_{n-1}]$ by \cref{lemmaforpelleq}.
%     Hence, we obtain $x=2p_{\ell-1}$ and $y=q_{\ell-1}$.
%     By taking the determinant of both side,
%     we have
%     \begin{align*}
%         p_{\ell-1}q_{\ell-2} - p_{\ell-2}q_{\ell-1} 
%         = \left(\frac{x}{2}\right)^2 - X_{n}^2y^2
%         = p_{\ell-1}^2 - X_{n}^2q_{\ell-1}^2.
%     \end{align*}
%     Since $p_{\ell-1}^2-X_{n}^2q_{\ell-1}^2=(-1)^{\ell}$, we obtain the desired result.
% \end{proof}

\subsection{PCF varieties}

In this subsection,
we recall the definition of a PCF variety over $\Z[X_{n-1}]$.
For a finite continued fraction $[c_0, \dots, c_m]$ over $\Z[X_{n-1}]$,
define
\begin{align*}
M([c_0,c_1\dots,c_m]) 
&= \begin{pmatrix}
M([c_0,c_1\dots,c_m])_{11} & M([c_0,c_1\dots,c_m])_{12} \\
M([c_0,c_1\dots,c_m])_{12} & M([c_0,c_1\dots,c_m])_{22} \\
\end{pmatrix}\\
&= 
\begin{pmatrix}
    c_0 & 1 \\
    1 & 0 
\end{pmatrix}
\begin{pmatrix}
    c_1 & 1 \\
    1 & 0 
\end{pmatrix}\dots
\begin{pmatrix}
    c_m & 1 \\
    1 & 0 
\end{pmatrix}.
\end{align*}
For $N\in\Z_{\geq 0}$ and $\ell\in\Z_{\geq 1}$, 
we set 
\begin{align*}
&E(y_1,\dots, y_N, x_1,\dots, x_{\ell})\\
&=
\begin{pmatrix}
E(y_1,\dots,y_N,x_1,\dots,x_{\ell})_{11} & E(y_1,\dots,y_N,x_1,\dots,x_{\ell})_{12} \\
E(y_1,\dots,y_N,x_1,\dots,x_{\ell})_{21} & E(y_1,\dots,y_N,x_1,\dots,x_{\ell})_{22} \\
\end{pmatrix}\\
&:=
\begin{cases}
M([y_1,\dots,y_N,x_1,\dots,x_{\ell},0,-y_N,\dots,-y_1,0]) & \text{if $N\geq 1$,}\\
M([x_1,\dots,x_{\ell}]) & \text{if $N=0$}. 
\end{cases}
\end{align*}
Note that $E(y_1,\dots, y_N, x_1,\dots, x_{\ell}) = E(x_1,\dots, x_{\ell})$ when $N=0$.

\begin{definition}
Fix $N\in\Z_{\geq 0}$ and $\ell \in \Z_{\geq 1}$.
We define a PCF variety $V(X_{n})_{N,\ell}$ of $(N, \ell)$-type by 
\begin{align*}
\begin{cases}
E(y_1,\dots,y_N,x_1,\dots,x_{\ell})_{22}-E(y_1,\dots,y_N,x_1,\dots,x_{\ell})_{11}=0,\\
E(y_1,\dots,y_N,x_1,\dots,x_{\ell})_{12}=X_{n}^2 E(y_1,\dots,y_N,x_1,\dots,x_{\ell})_{21},
\end{cases}
\end{align*}
where $y_1,\dots,y_N, x_1, \dots, x_{\ell}$ are variables.
In what follows, $(y_1,\dots,y_N,x_1,\dots,x_{\ell})$ denotes the coordinate of $V(X_{n})_{N, \ell}$.
\end{definition}

In this paper, 
we deal with the cases $(N, \ell) = (0,3)$ and $(1,2)$, that is,
\begin{align*}
    V(X_{n})_{1,2}\colon\begin{cases}
        x_1 - (y_1(x_1x_2+1) + x_2) = 0,\\
        x_1y_1+1 = X_{n}^2(x_1x_2+1),
        % x_1x_2 - 2y_1x_1 = 0,\\
        % y_1^2x_1-y_1x_1x_2-x_2=-X_{n}^2x_1,
    \end{cases}
\end{align*}
and
\begin{align*}
    V(X_{n})_{0,3}\colon\begin{cases}
        x_2 - (x_1(x_2x_3)+1 + x_3) = 0,\\
        x_1x_2+1 = X_{n}^2(x_2x_3 + 1).
        % x_2(1-x_1x_3)-x_1-x_3=0,\\
        % x_2(x_1-X_{n}^2x_3)=X_{n}^2-1.
    \end{cases}
\end{align*}

\section{Key Theorem}\label{key_theorem}

In this section, we give relationships between units in $\Z[X_{n}]$ and $(N,\ell)$-type PCFs over $\Z[X_{n-1}]$ for $X_{n}$ when $(N,\ell)=(1,2)$ and $(0,3)$.
% They play an important roles in the proof of \cref{maintheorem1,maintheorem2}.

\subsection{$(1,2)$-type}

\begin{theorem}\label{keytheorem_12}
    We fix $n\in\Z_{\geq 1}$.
    Let $X_{n}=[a_0,\overline{a_1,a_2}]$ be a $(1,2)$-type continued fraction over $\Z[X_{n-1}]$.
    Then there exists $\e \in RE_{n}^+$ such that $a_0=(p(\e)-1)/q(\e)$, $a_1=q(\e)$, $a_2=2(p(\e)-1)/q(\e)$, and $\sgn(p(\e)q(\e))=1$.
    Conversely, if $\e \in RE_{n}^+$ satisfies $q(\e) \mid p(\e)-1$ and $\sgn(p(\e)q(\e))=1$ then we have
    \begin{align*}
        X_{n}=\left[\frac{p(\e)-1}{q(\e)},\overline{q(\e), 2\frac{p(\e)-1}{q(\e)}}\right].
    \end{align*}
\end{theorem}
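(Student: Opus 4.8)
The plan is to prove both implications through the same matrix identity, exactly as in the proof of \cref{1elltypetoPelleq}. Writing $w=[\overline{a_1,a_2}]$ for the purely periodic tail, the relation $X_n=a_0+1/w$ gives $w=1/(X_n-a_0)$ together with $w=[a_1,a_2,w]$, so that
\begin{align*}
    X_n=\left[a_0,a_1,a_2,\tfrac{1}{X_n-a_0}\right]
    =\begin{pmatrix} p_2 & p_1\\ q_2 & q_1\end{pmatrix}\begin{pmatrix} 0 & 1\\ 1 & -a_0\end{pmatrix}X_n .
\end{align*}
Since $X_n\in\Z[X_n]\setminus\Z[X_{n-1}]$ has minimal polynomial $T^2-X_n^2$ over $\Z[X_{n-1}]$ (note $X_n^2=2+X_{n-1}\in\Z[X_{n-1}]$), \cref{lemmaforpelleq} applies with $b=0$ and $c=-X_n^2$. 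I would record once and for all the convergents $p_0=a_0$, $q_0=1$, $p_1=a_0a_1+1$, $q_1=a_1$ and the recursions $p_2=a_2p_1+p_0$, $q_2=a_2q_1+q_0$; these make the whole computation explicit.

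For the forward direction, \cref{lemmaforpelleq} forces
\begin{align*}
    \begin{pmatrix} p_2 & p_1\\ q_2 & q_1\end{pmatrix}\begin{pmatrix} 0 & 1\\ 1 & -a_0\end{pmatrix}
    =\begin{pmatrix} \tfrac{x}{2} & X_n^2y\\ y & \tfrac{x}{2}\end{pmatrix},
\end{align*}
so $x=2p_1$ and $y=q_1$. Setting $\e:=p_1+X_nq_1$ one has $p(\e)=p_1$, $q(\e)=q_1$, and $\e\in RE_n^+$ by \cref{1elltypetoPelleq}. Then $a_1=q_1=q(\e)$ is immediate, while $p_1-1=a_0a_1=a_0q(\e)$ yields simultaneously $q(\e)\mid p(\e)-1$ and $a_0=(p(\e)-1)/q(\e)$. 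Comparing the $(2,2)$-entry $q_2-a_0q_1=p_1$ with $q_2=a_2q_1+1$ and using $p_1-1=a_0q_1$ gives $a_2q_1=2a_0q_1$, hence $a_2=2(p(\e)-1)/q(\e)$. The sign statement $\sgn(p(\e)q(\e))=1$ I would extract at the end, from the fact that the given expansion actually converges to $X_n$; see the next paragraph.

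For the converse, given $\e\in RE_n^+$ with $q(\e)\mid p(\e)-1$ and $\sgn(p(\e)q(\e))=1$, I set $a_0=(p(\e)-1)/q(\e)$, $a_1=q(\e)$, $a_2=2(p(\e)-1)/q(\e)$; the divisibility guarantees $a_0,a_2\in\Z[X_{n-1}]$, and one checks $p_1=p(\e)$, $q_1=q(\e)$. Using $N_{n/n-1}(\e)=p(\e)^2-X_n^2q(\e)^2=1$, a direct computation gives
\begin{align*}
    \begin{pmatrix} p_2 & p_1\\ q_2 & q_1\end{pmatrix}\begin{pmatrix} 0 & 1\\ 1 & -a_0\end{pmatrix}
    =\begin{pmatrix} p(\e) & X_n^2q(\e)\\ q(\e) & p(\e)\end{pmatrix},
\end{align*}
whose fixed points on $\PP^1$ are exactly $\pm X_n$. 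The remaining, and genuinely substantive, point is to show that $[a_0,\overline{a_1,a_2}]$ converges to $X_n$ and not to $-X_n$. Here I would work through the distinguished real embedding $\iota\colon\B_n\hookrightarrow\R$ with $\iota(X_n)>0$: the period matrix $M=\left(\begin{smallmatrix} a_1a_2+1 & a_1\\ a_2 & 1\end{smallmatrix}\right)$ has $\iota$-eigenvalues $\iota(p(\e))\pm\sqrt{\iota(p(\e))^2-1}$, and a short computation shows that the attracting fixed point of $M$ equals $1/(\iota(X_n)-\iota(a_0))$ precisely when $\iota(p(\e))$ and $\iota(q(\e))$ share a sign, i.e.\ when $\sgn(p(\e)q(\e))=1$; the opposite sign would select $-X_n$. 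Since $\iota$ is injective on $\B_n$, convergence of the convergents to this fixed point upgrades to the identity $[a_0,\overline{a_1,a_2}]=X_n$ in $\B_n$.

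The linear-algebra bookkeeping is routine and essentially dictated by \cref{lemmaforpelleq,1elltypetoPelleq}. The main obstacle is the convergence step: making rigorous what ``$X_n=[a_0,\overline{a_1,a_2}]$'' means for a $\Z[X_{n-1}]$-PCF, and proving that the sign condition $\sgn(p(\e)q(\e))=1$ is exactly the hypothesis that forces the attracting fixed point, hence the limit of the convergents, to be $X_n$ rather than its conjugate $-X_n$; the same computation serves both directions. I also need to dispatch the degenerate case $q(\e)=0$, where $\e=\pm1$ and no genuine $(1,2)$-type expansion arises.
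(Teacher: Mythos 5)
Your proposal is correct, and its skeleton is the same as the paper's: both directions run through the matrix identity of \cref{lemmaforpelleq} and \cref{1elltypetoPelleq}, the unit $\e=p_1+X_nq_1\in RE_n^+$, and the observation that the value of the PCF equals $\sgn(p(\e)q(\e))X_n$, read off from the dominant eigenvalue of $E=E(a_0,a_1,a_2)$. The one genuine point of divergence is the convergence step: the paper does not argue the dynamics by hand but instead verifies the three hypotheses of Brock--Elkies--Jordan's convergence criterion (\cite[Theorem 4.3]{BrockElkiesJordan2021}) --- namely $E\neq -I$, the relevant $M(\cdot)_{21}$ entries are nonzero, and $\Tr(E)^2=4p(\e)^2\geq 4$ --- and then computes the limit as the attracting fixed point, exactly as in (\ref{signOfPCF12}). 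Your direct hyperbolic-dynamics argument through the distinguished real embedding is a legitimate, self-contained substitute, but to close it you must record the two facts that the citation packages for you: first, that the eigenvalue moduli are genuinely distinct, which requires $q(\e)\neq 0$ (equivalently $\e\neq\pm1$; note this is already forced by the hypothesis $\sgn(p(\e)q(\e))=1$, since $\sgn(0)=0$), whence $p(\e)^2=1+X_n^2q(\e)^2>1$ and $|\lambda_+|>1>|\lambda_-|$; and second, that no convergent ever lands on the repelling fixed point, which is automatic because all convergents lie in $\PP^1(\B_{n-1})$ while $\pm X_n\notin\B_{n-1}$. Granting these, your identification of the attracting fixed point ($1/(X_n-a_0)$ precisely when $\sgn(p(\e)q(\e))=1$) agrees with the paper's computation, and your entry-by-entry derivation of $a_2=2a_0$ from the $(2,2)$-entry of the matrix identity is in fact more explicit than the paper's corresponding appeal to the defining equations of the PCF variety $V(X_{n})_{1,2}$.
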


In other words, this theorem states that there is a bijection
\[
    P_{n}^{1,2}:RE_{n}^{+}(1,2) \to \left\{[a_0, \overline{a_1,a_2}] \setmid X_{n} = [a_0,\overline{a_1,a_2}]\right\},
\]
where $RE_n^+(1,2)=\{\e \in RE_n^+\mid q(\e)\mid p(\e)-1 \text{ and }\sgn(p(\e)q(\e))=1\}$.
\begin{proof}
    First we suppose that there exists $\e \in RE_{n}^+$ such that $q(\e)\mid p(\e)-1$.
    Here we note that $\e \neq \pm1$.
    If we put $x_0=(p(\e)-1)/q(\e)$, $x_1=q(\e)$, and $x_2=2(p(\e)-1)/q(\e)$, then we see that $(x_0,x_1,x_2)$ satisfies the equation (\ref{1_2_def_poly}).
    Thus we have $(x_0,x_1,x_2)\in V(X_{n})_{1,2}(\Z[X_{n-1}])$.
    By using the Brock--Elkies--Jordan's convergence criteria \cite[Theorem 4.3]{BrockElkiesJordan2021}, we check the convergence of $[x_0,\overline{x_1,x_2}]$.
    Firstly, we have
    \begin{align*}
        E(x_0,x_1,x_2)=
        \begin{pmatrix}
            p(\e) & X_{n}^2q(\e) \\
            q(\e) & p(\e)
        \end{pmatrix}
        \neq \sqrt{-1}^2I.
    \end{align*}
    Secondly, we have
    \begin{align*}
        M([x_1,x_2])_{21}=M([x_2,x_1])_{21}=x_2\neq0.
    \end{align*}
    Finally, we have
    \begin{align*}
        (-1)^2\Tr(E(x_0,x_1,x_2))^2=4p(\e)^2\geq 4
    \end{align*}
    The last inequality is due to $p(\e)^2=1+X_{n}^2q(\e)^2\geq 1$.
    Hence we complete the proof of the convergence.
    We determine the value of $[x_0,\overline{x_1,x_2}]$.
    The eigenvalue of $E(x_0,x_1,x_2)$ whose absolute value is greater than or equal to $1$ is $\lambda=p(\e)+\sgn(p(\e))|X_{n}q(\e)|$.
    Thus the value of $[x_0,\overline{x_1,x_2}]$ is
    \begin{align}\label{signOfPCF12}
        [x_0,\overline{x_1,x_2}]&=\frac{\lambda -p(\e)}{q(\e)}=\sgn(p(\e)q(\e))X_{n}.
    \end{align}

    Next we suppose that $X_{n}=[a_0,\overline{a_1,a_2}]$ for some $a_0,a_1$, and $a_2\in\Z[X_{n-1}]$.
    Since $a_2=2a_0$,
    it holds that
    \begin{align}\label{1_2_def_poly}
        a_0^2a_1+2a_0=X_{n}^2a_1.
    \end{align}
    From \cref{1elltypetoPelleq}, we see that 
    \[
        p_1^2-X_{n}^2q_1^2=(a_0a_1+1)^2-X_{n}^2a_1^2=1.
    \]
    Thus we have $a_0a_1+1+X_{n}a_1 \in RE_{n}^+$.
    By putting $\e=a_0a_1+1+X_{n}a_1$, we obtain that $a_0=(p(\e)-1)/q(\e)$, $a_1=q(\e)$, and $a_2=2(p(\e)-1)/q(\e)$.
    In a similar manner to (\ref{signOfPCF12}),
    we can show that the signature of a PCF $[(p(\e)-1)/q(\e),\overline{q(\e),2(p(\e)-1)/q(\e)}]$ coincides with the signature of $p(\e)q(\e)$, that is, $\sgn(p(\e)q(\e))=1$.
    % We will show that the signature of a PCF $[(p(\e)-1)/q(\e),\overline{q(\e),2(p(\e)-1)/q(\e)}]$ coincides with the signature of $p(\e)q(\e)$ at the end of the proof (see (\ref{signOfPCF12})).
    % Thus we obtain $\sgn(p(\e)q(\e))=1$.
\end{proof}

\subsection{$(0,3)$-type}

Before we state the theorem,
we show the following lemma.
\begin{lemma}\label{DefPoly}
    If $a_1,a_2,a_3 \in \Z[X_{n-1}]$ satisfy
    \begin{numcases}{}
        a_{1}a_{2} + 1 = X_{n}^{2}(a_{2}a_{3} + 1), \label{eq_1}\\
        a_{2}^{2} - X_{n}^{2}(a_{2}a_{3} + 1)^{2} = -1, \label{eq_2}
    \end{numcases}
    then we have $a_1a_2a_3+a_1+a_3=a_2$.
\end{lemma}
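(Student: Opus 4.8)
The plan is to regard the desired identity $a_1a_2a_3+a_1+a_3=a_2$ as an algebraic consequence of \cref{eq_1,eq_2} and to extract it by a single multiplication followed by substitution. The key observation is that the quantity $a_2a_3+1$ occurs in both hypotheses, so it is convenient to abbreviate $W:=a_2a_3+1$. With this notation the target identity rewrites as $a_1(a_2a_3+1)+a_3-a_2=0$, that is, $a_1W+a_3-a_2=0$, while \cref{eq_1} reads $a_1a_2=X_{n}^2W-1$ and \cref{eq_2} reads $X_{n}^2W^2=a_2^2+1$.

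First I would multiply the expression $a_1W+a_3-a_2$ by $a_2$, obtaining $a_1a_2W+a_2a_3-a_2^2$. The point of multiplying by $a_2$ is that it produces the product $a_1a_2$, which \cref{eq_1} lets me replace by $X_{n}^2W-1$; this turns the first term into $X_{n}^2W^2-W$. Then \cref{eq_2} replaces $X_{n}^2W^2$ by $a_2^2+1$, and the relation $a_2a_3=W-1$ rewrites the middle term. A short cancellation collapses the whole expression to $0$, so that $a_2\,(a_1W+a_3-a_2)=0$.

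Finally I would cancel the factor $a_2$. Since $\Z[X_{n-1}]$ is an integral domain, it suffices to check $a_2\neq 0$. If $a_2=0$ then $W=1$ and \cref{eq_2} becomes $-X_{n}^2=-1$, forcing $X_{n}^2=1$; but $X_{n}=2\cos(2\pi/2^{n+2})$ satisfies $X_{n}^2=2+X_{n-1}\geq 2>1$ for every $n\geq 1$, a contradiction. Hence $a_1W+a_3-a_2=0$, which is exactly $a_1a_2a_3+a_1+a_3=a_2$.

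I do not anticipate any real obstacle: the computation is a direct substitution of the two hypotheses into a single linear combination, and the one point demanding care is the legitimacy of cancelling $a_2$, which is settled by the domain property of $\Z[X_{n-1}]$ together with the elementary fact that $a_2=0$ is incompatible with \cref{eq_2}.
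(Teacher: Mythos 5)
Your proof is correct and takes essentially the same route as the paper's: both arguments reduce the identity to the single relation $X_{n}^{2}(a_{2}a_{3}+1)^{2}-a_{2}^{2}-1=0$ after clearing a factor of $a_2$ (the paper divides by $a_2$ to solve (\ref{eq_1}) for $a_1$ and substitutes, while you multiply the target expression by $a_2$ and cancel at the end). If anything, you are slightly more careful than the paper, which asserts $a_2\neq 0$ without comment, whereas you justify it from (\ref{eq_2}) and $X_{n}^{2}=2+X_{n-1}\geq 2$.
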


\begin{proof}
    Since $a_2\neq0$, we have $a_1=(X_{n}^2(a_2a_3+1)-1)/a_2$ by (\ref{eq_1}).
    Thus we have
    \begin{align*}
        a_1a_2a_3+a_1+a_3-a_2&=(X_{n}^2(a_2a_3+1)-1)a_3+\frac{X_{n}^2(a_2a_3+1)-1}{a_2}+a_3-a_2 \\
        &=\frac{1}{a_2}(X_{n}^2(a_2a_3+1)^2-a_2^2-1).
    \end{align*}
    Since $X_{n}^2(a_2a_3+1)^2-a_2^2-1=0$ by (\ref{eq_2}), we have $a_1a_2a_3+a_1+a_3=a_2$.
\end{proof}

\begin{theorem}\label{keytheorem_03}
    We fix $n\in\Z_{\geq 1}$.
    Let $X_{n}=[\overline{a_1,a_2,a_3}]$ be a $(0,3)$-type continued fraction over $\Z[X_{n-1}]$.
    Then there exists $\e \in RE_{n}^-$ such that $a_1=(X_{n}^2q(\e)-1)/p(\e)$, $a_2=p(\e)$, $a_3=(q(\e)-1)/p(\e)$, and $\sgn(p(\e)q(\e))=1$.
    Conversely, if $\e \in RE_{n}^-$ satisfies $p(\e) \mid X_{n}^2q(\e)-1$, $p(\e)\mid q(\e)-1$, and $p(\e)q(\e)=1$, then we have
    \begin{align*}
        X_{n}=\left[\overline{\frac{X_{n}^2q(\e)-1}{p(\e)}, p(\e), \frac{q(\e)-1}{p(\e)}}\right].
    \end{align*}
\end{theorem}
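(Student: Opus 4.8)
The plan is to mirror the proof of \cref{keytheorem_12} for the $(0,3)$-case, exploiting that the relevant matrix turns out to have exactly the same shape as in the $(1,2)$-case. Writing $E(x_1,x_2,x_3)=M([x_1,x_2,x_3])$ and expanding the product, one finds $p_2=x_1x_2x_3+x_1+x_3$, $p_1=x_1x_2+1$, $q_2=x_2x_3+1$, and $q_1=x_2$. Under the two equations cutting out $V(X_{n})_{0,3}$, these simplify via \cref{DefPoly} to $p_2=x_2$ and $p_1=X_{n}^2q_2$, so that
\[
    E(x_1,x_2,x_3)=\begin{pmatrix} x_2 & X_{n}^2(x_2x_3+1)\\ x_2x_3+1 & x_2\end{pmatrix},
\]
which is again of the form $\left(\begin{smallmatrix} p & X_{n}^2 q\\ q & p\end{smallmatrix}\right)$ that appeared in \cref{keytheorem_12}. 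This structural observation is what makes both directions go through almost verbatim.

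For the forward direction, suppose $X_{n}=[\overline{a_1,a_2,a_3}]$. Then $X_{n}$ is a fixed point of the M\"obius transformation attached to $M([a_1,a_2,a_3])$, so $q_2X_{n}^2+(q_1-p_2)X_{n}-p_1=0$. Since $X_{n}\notin\Z[X_{n-1}]$ and its minimal polynomial over $\B_{n-1}$ is $X^2-X_{n}^2$ (so $b=0$, $c=-X_{n}^2$ in the notation of \cref{lemmaforpelleq}), comparing coefficients forces $q_1=p_2$ and $p_1=X_{n}^2q_2$; these are precisely the two defining equations of $V(X_{n})_{0,3}$. First I would invoke \cref{0elltypetoPelleq} to get $p_2^2-X_{n}^2q_2^2=-1$, and then set $\e=p_2+X_{n}q_2=a_2+X_{n}(a_2a_3+1)$, which lies in $RE_{n}^-$. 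Reading off $p(\e)=a_2$ and $q(\e)=a_2a_3+1$ yields $a_2=p(\e)$, $a_3=(q(\e)-1)/p(\e)$, and (from $a_1a_2+1=X_{n}^2q(\e)$) $a_1=(X_{n}^2q(\e)-1)/p(\e)$; the statement $\sgn(p(\e)q(\e))=1$ comes from the value computation below.

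For the converse, given $\e\in RE_{n}^-$ satisfying the stated divisibility conditions, I would set $x_1=(X_{n}^2q(\e)-1)/p(\e)$, $x_2=p(\e)$, and $x_3=(q(\e)-1)/p(\e)$. The two divisibilities put $x_1,x_3\in\Z[X_{n-1}]$, and a direct check gives $x_2x_3+1=q(\e)$ and $x_1x_2+1=X_{n}^2q(\e)$, so \cref{eq_1} holds; \cref{eq_2} is just $N_{n/n-1}(\e)=p(\e)^2-X_{n}^2q(\e)^2=-1$. Hence by \cref{DefPoly} the first equation of $V(X_{n})_{0,3}$ also holds, and $(x_1,x_2,x_3)\in V(X_{n})_{0,3}(\Z[X_{n-1}])$. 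To conclude $X_{n}=[\overline{x_1,x_2,x_3}]$ I would then verify the Brock--Elkies--Jordan convergence criterion \cite[Theorem 4.3]{BrockElkiesJordan2021} for the explicit $E$ above --- namely $E\neq\sqrt{-1}^2I$ (else $q(\e)=0$, contradicting $N_{n/n-1}(\e)=-1$), the non-vanishing of the $(2,1)$-entries of the three cyclic rotations $M([x_1,x_2,x_3])$, $M([x_2,x_3,x_1])$, $M([x_3,x_1,x_2])$, and the trace/dominance condition --- and finally compute the value as the attracting fixed point $(\lambda-p(\e))/q(\e)=\sgn(p(\e)q(\e))X_{n}$, where $\lambda=p(\e)+\sgn(p(\e))|X_{n}q(\e)|$ is the dominant eigenvalue. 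The sign condition then forces the value to be $+X_{n}$.

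I expect the convergence verification to be the main obstacle, for two reasons. First, unlike the $(1,2)$-case where $\det E=+1$, here $\ell=3$ is odd and $\det E=N_{n/n-1}(\e)=-1$, so the eigenvalues $\e,\tau_n(\e)$ satisfy $\e\,\tau_n(\e)=-1$ rather than $+1$; I must check that the dominance hypothesis of \cite[Theorem 4.3]{BrockElkiesJordan2021} is still met at the distinguished archimedean place and that the attracting fixed point is $+X_{n}$ and not $-X_{n}$ --- which is exactly where $\sgn(p(\e)q(\e))=1$ enters. Second, the non-degeneracy of the cyclic rotations requires showing $x_1x_3+1\neq0$; substituting $p(\e)^2=X_{n}^2q(\e)^2-1$ reduces $x_1x_3+1$ to $q(\e)\bigl(2X_{n}^2q(\e)-X_{n}^2-1\bigr)/p(\e)^2$, whose second factor cannot vanish for $q(\e)\in\Z[X_{n-1}]$, so this is a short but genuine computation rather than a formal consequence of the variety equations.
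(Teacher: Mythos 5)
Your argument is correct and follows essentially the same route as the paper's proof: the same substitution $(x_1,x_2,x_3)$, the same use of \cref{DefPoly} and \cref{0elltypetoPelleq}, the same Brock--Elkies--Jordan convergence checks (including the key computation $x_1x_3+1=q(\e)\bigl((2q(\e)-1)X_{n}^2-1\bigr)/p(\e)^2\neq 0$, which the paper rules out because $X_{n}^2$ is not a unit in $\Z[X_{n-1}]$), and the same eigenvalue computation giving the value $\sgn(p(\e)q(\e))X_{n}$. The only deviations are cosmetic: you re-derive the defining equations of $V(X_{n})_{0,3}$ from the fixed-point/minimal-polynomial comparison (the paper simply cites the variety's definition, which encapsulates the same argument via \cref{lemmaforpelleq}), and your first non-degeneracy check should read $E\neq\sqrt{-1}^{3}I$ rather than $\sqrt{-1}^{2}I$ for $\ell=3$, though both conditions hold trivially here.
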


In other words, this theorem states that there is a bijection
\[
    P_{n}^{0,3}:RE_{n}^{-}(0,3) \to \left\{[\overline{a_1,a_2,a_3}] \setmid X_{n} = [\overline{a_1,a_2,a_3}]\right\},
\]
where 
\[
    RE_n^-(0,3)=\{\e \in RE_n^-\mid p(\e)\mid q(\e)-1,\ p(\e)\mid X_n^2q(\e)-1\text{, and }\sgn(p(\e)q(\e))=1\}.
\]

\begin{proof}
    First we suppose that there exists $\e \in RE_{n}^-$ such that $p(\e) \mid X_{n}^2q(\e)-1$ and $p(\e)\mid q(\e)-1$.
    If we put $x_1=(X_{n}^2q(\e)-1)/p(\e)$, $x_2=p(\e)$, and $x_3=(q(\e)-1)/p(\e)$, then we see that $(x_1,x_2,x_3)$ satisfies the equation (\ref{0_3_def_poly_2}), and also satisfies (\ref{0_3_def_poly_1}) by \cref{DefPoly}.
    Thus we have $(x_1,x_2,x_3)\in V(X_{n})_{0,3}(\Z[X_{n-1}])$.
    By using the Brock--Elkies--Jordan's convergence criteria \cite[Theorem 4.3]{BrockElkiesJordan2021}, we check the convergence of $[\overline{x_1,x_2,x_3}]$.
    Firstly, we have
    \begin{align*}
        E(x_1,x_2,x_3)=
        \begin{pmatrix}
            p(\e) & X_{n}^2q(\e) \\
            q(\e) & p(\e)
        \end{pmatrix}
        \neq \sqrt{-1}^3I.
    \end{align*}
    Secondly, we have
    \begin{align*}
        M([x_1,x_2,x_3])_{21}&=x_2x_3+1=q(\e)\neq0, \\        
        M([x_3,x_1,x_2])_{21}&=x_1x_2+1=X_{n}^2q(\e)\neq0, \\
        M([x_2,x_3,x_1])_{21}&=x_3x_1+1 \\
        &=\frac{X_{n}^2q(\e)^2-X_{n}^2q(\e)-q(\e)+1+p(\e)^2}{p(\e)^2} \\
        &=\frac{q(\e)}{p(\e)^2}((2q(\e)-1)X_{n}^2-1)\neq0.
    \end{align*}
    The last inequality is due to the fact that $X_{n}^2$ is not a unit in $\Z[X_{n-1}]$.
    Finally, we have
    \begin{align*}
        (-1)^3\Tr(E(x_1,x_2,x_3))^2=-4p(\e)^2<0.
    \end{align*}
    Hence we complete the proof of the convergence.
    We determine the value of $[x_1,x_2,x_3]$.
    The eigenvalue of $E(x_1,x_2,x_3)$ whose absolute value is greater than or equal to $1$ is $\lambda=p(\e)+\sgn(p(\e))|X_{n}q(\e)|$.
    Thus the value of $[x_1,x_2,x_3]$ is
    \begin{align}\label{signOfPCF03}
        [\overline{x_1,x_2,x_3}]&=\frac{\lambda -p(\e)}{q(\e)}=\sgn(p(\e)q(\e))X_{n}.
    \end{align}

    Next we suppose that $X_{n}=[\overline{a_1,a_2,a_3}]$ for some $a_1,a_2$, and $a_3\in\Z[X_{n-1}]$.
    Then we obtain
    \begin{align}
        a_1a_2a_3+a_1+a_3 &= a_2, \label{0_3_def_poly_1}\\
        a_1a_2+1 &= X_{n}^2(a_2a_3+1), \label{0_3_def_poly_2}
    \end{align}
    from the defining equation of $V(X_{n})_{0,3}$.
    From \cref{0elltypetoPelleq}, we see that $p_{2}^2-X_{n}^2q_{2}^2=(a_1a_2a_3+a_1+a_3)^2-X_{n}^2(a_2a_3+1)^2=-1$.
    Thus, by using (\ref{0_3_def_poly_1}), we have $a_2+X_{n}(a_2a_3+1) \in RE_{n}^-$.
    By putting $\e=a_2+X_{n}(a_2a_3+1)$, we obtain that $a_1=(X_{n}^2q(\e)-1)/p(\e)$, $a_2=p(\e)$, and $a_3=(q(\e)-1)/p(\e)$.
    In a similar manner to (\ref{signOfPCF03}), 
    we can show that the signature of a PCF $[\overline{(X_{n}^2q(\e)-1)/p(\e),p(\e),(q(\e)-1)/p(\e)}]$ coincides with the signature of $p(\e)q(\e)$.
    Thus we obtain $\sgn(p(\e)q(\e))=1$.
\end{proof}

\section{Proof of \cref{maintheorem1}}\label{main_thm_1}

In this section, we state the explicit form of \cref{maintheorem1} and give its proof.
We recall the property of special units
\begin{align*}
  \delta_{n} = \frac{X_n + 2}{X_n - 2}
\end{align*}
and
\begin{align*}
  \eta_{n} = 1 + \sum_{k=1}^{2^n - 1}2\cos\left(\frac{k\pi}{2^{n+1}}\right).
\end{align*}

\begin{proposition}[{cf. \cite[Section 3 and (6.1)]{MorisawaOkazaki2020}}]\label{propertyOfUnits}
  We have $\delta_n \in RE_n^+$ and $\eta_n \in RE_n^-$ for all $n\in\Z_{\geq 1}$.
\end{proposition}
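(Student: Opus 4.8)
The plan is to verify directly that $\delta_n$ and $\eta_n$ have the claimed relative norms by computing $N_{n/n-1}(\delta_n)$ and $N_{n/n-1}(\eta_n)$. Recall that for $x \in \Z[X_n]$ we have $N_{n/n-1}(x) = x\,\tau_n(x)$, where $\tau_n$ is the nontrivial element of $\G(\B_n/\B_{n-1})$. The key fact I would use is that $\tau_n$ fixes $\B_{n-1}$ pointwise and sends $X_n = \zeta_{2^{n+2}} + \zeta_{2^{n+2}}^{-1}$ to its conjugate over $\B_{n-1}$; since the minimal polynomial of $X_n$ over $\B_{n-1}$ is $X^2 - (2 + X_{n-1})$ (because $X_n^2 = 2 + X_{n-1}$, as $X_n^2 = 2 + 2\cos(2\pi/2^{n+1}) = 2 + X_{n-1}$), we have $\tau_n(X_n) = -X_n$.

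For $\delta_n$ the computation is immediate. First I would note $\tau_n(\delta_n) = (\tau_n(X_n) + 2)/(\tau_n(X_n) - 2) = (-X_n + 2)/(-X_n - 2) = (X_n - 2)/(X_n + 2)$. Hence
\begin{align*}
    N_{n/n-1}(\delta_n) = \delta_n \,\tau_n(\delta_n) = \frac{X_n + 2}{X_n - 2}\cdot\frac{X_n - 2}{X_n + 2} = 1,
\end{align*}
so $\delta_n \in RE_n^+$, provided one also checks $\delta_n \in \Z[X_n]$. The latter requires rationalizing: $\delta_n = (X_n+2)^2/(X_n^2 - 4) = (X_n+2)^2/(X_{n-1} - 2)$, and one verifies this lies in the ring of integers using that $X_{n-1} - 2$ divides $(X_n + 2)^2$ up to a unit in $\Z[X_n]$; this is where I would invoke the cited computation of Morisawa--Okazaki, since $\delta_n$ is a well-studied circular-type unit.

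For $\eta_n$ the main obstacle is recognizing and manipulating the trigonometric sum $\eta_n = 1 + \sum_{k=1}^{2^n-1} 2\cos(k\pi/2^{n+1})$. The natural approach is to rewrite it as a sum of roots of unity: using $2\cos(k\pi/2^{n+1}) = \zeta_{2^{n+2}}^{k} + \zeta_{2^{n+2}}^{-k}$, express $\eta_n$ as a partial sum of $\zeta_{2^{n+2}}^j$ over a symmetric range, then identify it with a Gauss-sum-type or cyclotomic-unit expression whose relative norm can be evaluated by pairing each factor with its $\tau_n$-image. The delicate point is that $\tau_n$ acts on the index $k$ in a way that permutes the summands, and one must track the sign carefully to obtain $N_{n/n-1}(\eta_n) = -1$ rather than $+1$. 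Rather than grinding through this, I would lean on the explicit identities in Morisawa--Okazaki, Section 3, which establish that $\eta_n$ is the relevant relative unit of norm $-1$; the role of our proof is to record that their normalization matches ours, namely that $\eta_n \in RE_n^-$ for every $n \geq 1$. The genuinely computational heart is thus the $\eta_n$ case, and I expect citing the known structural results for these units to be cleaner than a direct trigonometric evaluation.
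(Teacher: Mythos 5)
Your computation of the relative norm of $\delta_n$ is correct and agrees with the paper, but the rest of the proposal has two genuine gaps, both consisting in deferring to the citation exactly the steps that constitute the paper's proof. First, the integrality of $\delta_n$: your proposed reduction is circular. Since $X_{n-1}-2 = X_n^2-4 = (X_n+2)(X_n-2)$, the assertion that ``$X_{n-1}-2$ divides $(X_n+2)^2$ up to a unit'' is literally a restatement of the assertion $\delta_n\in\Z[X_n]$, so nothing has been gained before you hand the problem off to Morisawa--Okazaki. The paper settles this point in two lines with a ramification argument you do not mention: $N_n(X_n+2)=N_n(X_n-2)=2$ and $2$ is totally ramified in $\B_n$, so $X_n+2$ and $X_n-2$ both generate the unique prime ideal above $2$; hence their quotient is a unit of $\Z[X_n]$.

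Second, and more seriously, the claim $\eta_n\in RE_n^-$ --- which you yourself identify as ``the genuinely computational heart'' --- is not proved at all; you sketch a strategy and then replace it by a citation. Moreover the sketch misidentifies the key structural fact: $\tau_n$ does not ``permute the summands'' of $\eta_n$. Since $\tau_n$ lifts to $\zeta_{2^{n+2}}\mapsto -\zeta_{2^{n+2}}$, each summand is an eigenvector, $\tau_n(2\cos(k\pi/2^{n+1}))=(-1)^k\,2\cos(k\pi/2^{n+1})$, and this is what makes the computation finite and elementary: the even-$k$ part of $\eta_n$ (together with the leading $1$) is $p(\eta_n)$ and the odd-$k$ part is $X_nq(\eta_n)$, so $N_{n/n-1}(\eta_n)=p(\eta_n)^2-X_n^2q(\eta_n)^2$ expands, after writing $2\cos(k\pi/2^{n+1})=\zeta_{2^{n+2}}^k+\zeta_{2^{n+2}}^{-k}$, into sums of roots of unity that collapse to $-1+\sum_{i=2}^{n+1}\Tr_{\Q(\zeta_{2^{i}})/\Q}(\zeta_{2^{i}})=-1$, using $\Tr_{\Q(\zeta_{2^{i}})/\Q}(\zeta_{2^{i}})=0$ for $i\geq 2$. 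Leaning on Morisawa--Okazaki may be defensible scholarship --- the proposition is indeed stated with a ``cf.'' --- but as a proof your proposal establishes only the easy identity $\delta_n\tau_n(\delta_n)=1$ and leaves both substantive claims to the reference, without even carrying out the normalization check that you describe as the purpose of the proof.
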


\begin{proof}
  For the first assertion, it is easy to check that $\delta_{n}\tau_n(\delta_{n})=1$.
  Thus we have to check that $\delta_n \in \Z[X_n]$.
  Since $N_{n}(X_n+2)=N_n(X_n-2)=2$ and $2$ is a totally ramified prime in $\B_n$, 
  both $X_n+2$ and $X_n-2$ are prime elements over $2$.
  Therefore we have $\delta_n \in \Z[X_n]$ and the first assertion follows.

  For the second assertion, we have
  \begin{align*}
      N_{n/n-1}(\eta_n)=\left(1+\sum_{k=1}^{2^{n-1}-1}2\cos\left(\frac{(2k)\pi}{2^{n+1}}\right)\right)^2-\left(\sum_{k=0}^{2^{n-1}-1}2\cos\left(\frac{(2k+1)\pi}{2^{n+1}}\right)\right)^2.
  \end{align*}
  by $\tau_n(2\cos((k\pi)/2^{n+1}))=(-1)^k2\cos((k\pi)/2^{n+1})$.
  Since $2\cos((k\pi)/2^{n+1})=\zeta_{2^{n+2}}^k+\zeta_{2^{n+2}}^{-k}$,
  we have
  \begin{align*}
    &N_{n/n-1}(\eta_n) \\
    &=1+\sum_{k=1}^{2^{n-1}-1}2(\zeta_{n+2}^{2k}+\zeta_{n+2}^{-2k})+\sum_{k=1}^{2^{n-1}-1}(\zeta_{n+2}^{4k}+2+\zeta_{n+2}^{-4k}) \\
    &+\sum_{\substack{1\leq k,l \leq 2^{n-1}-1 \\ k\neq l}}2(\zeta_{2^{n+2}}^{2k+2l}+\zeta_{2^{n+2}}^{-2k-2l}+\zeta_{2^{n+2}}^{2k-2l}+\zeta_{2^{n+2}}^{2l-2k}) \\
    &-\sum_{k=0}^{2^{n-1}-1}(\zeta_{n+2}^{2(2k+1)}+2+\zeta_{n+2}^{-2(2k+1)})-\sum_{\substack{0\leq k,l \leq 2^{n-1}-1 \\ k\neq l}}2(\zeta_{2^{n+2}}^{2k+2l+2}+\zeta_{2^{n+2}}^{-2k-2l-2}+\zeta_{2^{n+2}}^{2k-2l}+\zeta_{2^{n+2}}^{2l-2k}) \\
    &=1+\sum_{k=1}^{2^{n-1}-1}(\zeta_{2^{n+1}}^{2k}+\zeta_{2^{n+1}}^{-2k})+\sum_{k=0}^{2^{n-1}-1}(\zeta_{2^{n+1}}^{2k+1}+\zeta_{2^{n+1}}^{-2k-1})-2 \\
    &=-1+\sum_{i=2}^{n+1}\Tr_{\Q(\zeta_{2^{i}})/\Q}(\zeta_{2^{i}}).
  \end{align*}
  Combining with $\Tr_{\Q(\zeta_{2^{i}})/\Q}(\zeta_{2^{i}})=0$ for all $i\geq 2$, 
  the second assertion follows.
\end{proof}
The following is the explicit form of \cref{maintheorem1}.
\begin{theorem}\label{maintheorem1_explicit}
  For every $n\in \Z_{\geq1}$ we have
  \begin{align*}
      X_{n}&=\left[2,\overline{\frac{4}{X_{n-1}-2},4}\right] \\
      &=\left[\overline{\frac{(\eta_{n}-\eta_{n-1})X_{n}-1}{\eta_{n-1}},\eta_{n-1},\frac{(\eta_{n}-\eta_{n-1})/X_{n}-1}{\eta_{n-1}}}\right].
  \end{align*}
\end{theorem}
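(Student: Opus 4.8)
The plan is to derive both expansions directly from the key theorems of the previous section by showing that the two distinguished units of \cref{propertyOfUnits} land in the relevant subsets: namely $\delta_n \in RE_n^{+}(1,2)$ and $\eta_n \in RE_n^{-}(0,3)$. Once this is established, \cref{keytheorem_12} and \cref{keytheorem_03} output the two displayed PCFs with no further work, since applying them reproduces the formulas verbatim. Because \cref{propertyOfUnits} already gives $\delta_n \in RE_n^{+}$ and $\eta_n \in RE_n^{-}$, what remains in each case is only to compute the components $p(\cdot)$ and $q(\cdot)$, to verify the divisibility conditions, and to verify the sign condition $\sgn(pq)=1$ in the principal real embedding.

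For the $(1,2)$-type, first I would rationalize $\delta_n$ using $X_n^2 = X_{n-1}+2$, obtaining
\[
  \delta_n = \frac{X_n+2}{X_n-2} = \frac{(X_n+2)^2}{X_n^2-4} = \frac{X_{n-1}+6}{X_{n-1}-2} + \frac{4}{X_{n-1}-2}\,X_n,
\]
so that $p(\delta_n) = (X_{n-1}+6)/(X_{n-1}-2)$ and $q(\delta_n) = 4/(X_{n-1}-2)$; both lie in $\Z[X_{n-1}]$ since $\delta_n \in \Z[X_n]$ by \cref{propertyOfUnits}. A direct computation gives $(p(\delta_n)-1)/q(\delta_n) = 8/4 = 2 \in \Z[X_{n-1}]$, which verifies $q(\delta_n)\mid p(\delta_n)-1$, and $p(\delta_n)q(\delta_n) = 4(X_{n-1}+6)/(X_{n-1}-2)^2 > 0$ (numerator and denominator both positive) verifies $\sgn(p(\delta_n)q(\delta_n))=1$. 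Hence $\delta_n \in RE_n^{+}(1,2)$, and \cref{keytheorem_12} yields $X_n = [\,2,\overline{4/(X_{n-1}-2),\,4}\,]$.

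For the $(0,3)$-type, the central computation is to split $\eta_n$ into its $\tau_n$-invariant and $\tau_n$-anti-invariant parts. Using $\tau_n(2\cos(k\pi/2^{n+1})) = (-1)^k 2\cos(k\pi/2^{n+1})$ exactly as in the proof of \cref{propertyOfUnits}, the even-indexed terms reassemble into $\eta_{n-1}$, giving $p(\eta_n)=\eta_{n-1}$, while the odd-indexed terms give $X_n\,q(\eta_n)=\eta_n-\eta_{n-1}$, that is, $q(\eta_n)=(\eta_n-\eta_{n-1})/X_n$. The decisive observation is that $p(\eta_n)=\eta_{n-1}$ is a \emph{unit} of $\Z[X_{n-1}]$: indeed $\eta_{n-1}\in RE_{n-1}\subset \Z[X_{n-1}]^{\times}$ for $n\geq 2$ (and $\eta_0=1$ in the base case $n=1$), so the two divisibility conditions $p(\eta_n)\mid X_n^2 q(\eta_n)-1$ and $p(\eta_n)\mid q(\eta_n)-1$ hold automatically, whence the candidate partial quotients $a_1,a_3$ lie in $\Z[X_{n-1}]$. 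Since $\eta_{n-1}>0$, $X_n>0$, and $\eta_n-\eta_{n-1}>0$ in the principal embedding (each follows from evaluating the defining cosine sums), the sign condition $\sgn(p(\eta_n)q(\eta_n))=1$ also holds. Thus $\eta_n\in RE_n^{-}(0,3)$, and substituting $X_n^2 q(\eta_n)=(\eta_n-\eta_{n-1})X_n$ into \cref{keytheorem_03} gives the displayed $(0,3)$-expansion.

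The individual steps are routine once the key theorems are available, and I expect the only genuinely delicate point to be the clean extraction of $p(\eta_n)=\eta_{n-1}$ and $q(\eta_n)=(\eta_n-\eta_{n-1})/X_n$ from the trigonometric sum, together with the recognition that $\eta_{n-1}$ is a unit in $\Z[X_{n-1}]$. This last fact is what makes the potentially awkward divisibility conditions of \cref{keytheorem_03} evaporate; without it, establishing $\eta_{n-1}\mid q(\eta_n)-1$ directly would be the main obstacle. A minor bookkeeping point is to confirm the degenerate base case $n=1$, where $X_0=0$ and $\eta_0=1$, so that every expression remains integral and the formulas specialize correctly.
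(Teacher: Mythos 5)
Your proposal is correct and follows essentially the same route as the paper: verify that $\delta_n$ and $\eta_n$ (already known to lie in $RE_n^{+}$ and $RE_n^{-}$ by \cref{propertyOfUnits}) satisfy the divisibility and sign conditions of \cref{keytheorem_12} and \cref{keytheorem_03}, via the computations $p(\delta_n)=(X_{n-1}+6)/(X_{n-1}-2)$, $q(\delta_n)=4/(X_{n-1}-2)$, $p(\eta_n)=\eta_{n-1}$, $q(\eta_n)=(\eta_n-\eta_{n-1})/X_n$, with the unit property of $\eta_{n-1}$ making the $(0,3)$ divisibility conditions automatic. Your write-up is in fact slightly more detailed than the paper's (e.g., the explicit $\tau_n$-eigenspace decomposition giving $p(\eta_n)=\eta_{n-1}$, and the positivity checks for the sign conditions), but the underlying argument is identical.
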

\begin{proof}
  We show that $\delta_{n}$ (resp.\ $\eta_{n}$) satisfies the condition of \cref{keytheorem_12} (resp.\ \cref{keytheorem_03}).
  Recall that $p(a)=(a+\tau_{n}(a))/2$ and $q(a)=(a-\tau_{n}(a))/(2X_{n})$ for all $a\in \Z[X_{n}]$.

  By \cref{propertyOfUnits},
  we have $\delta_{n}\in RE_{n}^+$.
  Moreover, we obtain
  \begin{align*}
    p(\delta_{n})&=\left(\frac{X_{n}+2}{X_{n}-2}+\frac{X_{n}-2}{X_{n}+2}\right)\cdot \frac{1}{2} \\
    &=\frac{2X_{n}^2+8}{X_{n}^2-4}\cdot\frac{1}{2} \\
    &=\frac{X_{n}^2+4}{X_{n}^2-4} \\
    &=\frac{X_{n-1}+6}{X_{n-1}-2}
  \end{align*}
  and
  \begin{align*}
    q(\delta_{n})&=\left(\frac{X_{n}+2}{X_{n}-2}-\frac{X_{n}-2}{X_{n}+2}\right)\cdot \frac{1}{2X_{n}} \\
    &=\frac{8X_{n}}{X_{n}^2-4}\cdot\frac{1}{2X_{n}} \\
    &=\frac{4}{X_{n-1}-2}.
  \end{align*}
  Thus we see that $q(\delta_{n})\mid p(\delta_{n})-1$. 
  Since $\sgn(p(\delta_{n})q(\delta_{n}))=1$ and $(p(\delta_{n})-1)/q(\delta_{n})=2$, 
  the former assertion follows.

  By \cref{propertyOfUnits}, 
  we have $\eta_{n}\in RE_{n}^{-}$.
  Moreover, we obtain
  \begin{align*}
    p(\eta_{n})=\eta_{n-1}
  \end{align*}
  by the definition of $\eta_{n}$.
  Since $\eta_{n-1}$ is a unit in $\Z[X_{n-1}]$, we see that $p(\eta_{n})\mid q(\eta_{n})-1$ and $p(\eta_{n})\mid X_{n}^2q(\eta_{n})-1$.
  Since $\eta_{n}=\eta_{n-1}+X_{n}q(\eta_{n})$ we have
  \begin{align*}
    q(\eta_{n})=\frac{\eta_{n}-\eta_{n-1}}{X_{n}}
  \end{align*}
  and $\sgn(p(\eta_{n})q(\eta_{n}))=1$.
  Hence the later assertion follows.
\end{proof}

\section{Proof of \cref{maintheorem2}}\label{main_thm_2}

This section gives the proof of \cref{maintheorem2}.
We define an injective map $\mu_n:\B_{n-1}\to \R^{2^{n-1}}; x \mapsto (\s(x))_{\s\in \Gal(\B_{n-1}/\Q)}$.
Then $\mu_n(\Z[X_{n-1}])$ is a complete lattice in $\R^{2^{n-1}}$.
We expand the domain of $\s$ to $\B_n$ by $\s(X_n)=\sqrt{2+\s(X_{n-1})}$ for each $\s \in \Gal(\B_{n-1}/\Q)$.
For each $s=(s_{\s})_{\s \in \Gal(\B_{n-1}/\Q)} \in \{1, -1\}^{2^{n-1}}$ we define
\begin{equation*}
    \mu_{s}(X_n)=(s_{\s}\s(X_n))_{\s \in \Gal(\B_{n-1}/\Q)}.
\end{equation*}
Let $||\cdot||$ denote the Euclidean distance in $\R^{2^{n-1}}$.
Let $a_s$ be an element in $\Z[X_{n-1}]$ such that $||\mu_{s}(X_n)-\mu_n(a_{s})||=\min\{||\mu_{s}(X_n)-\mu_n(a)||\mid a\in\Z[X_{n-1}]\}$.
For $s=(1,...,1)$ we have $a_{s}=1$ for all $n$ by \cite[Remark 3.2 and Proposition 3.3]{Yoshizaki2023}.

\subsection{$(1,2)$-type}
We recall 
\[
  RE_n^+(1,2)=\{\e\in RE_n^+\mid q(\e)\mid p(\e) -1\text{ and }\sgn(p(\e)q(\e))=1\}.
\]
Then, by \cref{keytheorem_12}, there is a bijection between $RE_n^+(1,2)$ and the set of $(1,2)$-type PCFs of $X_n$.
\begin{theorem}\label{maintheorem2_explicit_12}
    We fix $n\in\Z_{\geq 1}$.
    We set $b_{s,\s}=2\s(X_{n})/|s_{\s}\s(X_{n})-\s(a_s)|$ and $C_{s,\s}=\log(\sqrt{b_{s,\s}^2+1}+|b_{s,\s}|)$ for each $s=(s_{\s})_{\s\in\Gal(\B_{n-1}/\Q)}\in\{1,-1\}^{2^{n-1}}$ and $\s \in \Gal(\B_{n-1}/\Q)$.
    If $\e\in RE_{n}^+(1,2)$, there exists $\s\in \Gal(\B_{n-1}/\Q)$ such that $|\log|\s(\e)||\leq C_{s,\s}$,
    where $s=(\sgn(\s(p(\e)q(\e))))_{\s\in\Gal(\B_{n-1}/\Q)}$.
\end{theorem}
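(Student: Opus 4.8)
The plan is to rewrite both sides of the asserted inequality as values of the single increasing function $g(t)=\log\bigl(t+\sqrt{t^2+1}\bigr)$ on $[0,\infty)$ — so that $C_{s,\s}=g(|b_{s,\s}|)$ — reduce the claim to one scalar inequality about $q(\e)$, and then play a fixed ``competitor'' lattice point off against the Euclidean optimality of $a_s$. First I would make the left-hand side exact. Fix $\s$, write $P=\s(p(\e))$, $Q=\s(q(\e))$, $x=\s(X_n)>0$, and note $x\notin\B_{n-1}$, $Q\neq0$ (else $\e=\pm1$) and $P\neq0$ (since $P^2=1+x^2Q^2\geq1$), so all signs below are defined. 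Applying $\s$ to $N_{n/n-1}(\e)=p(\e)^2-X_n^2q(\e)^2=1$ gives $P^2-x^2Q^2=1$, so the two reals $\s(\e)=P+xQ$ and $\s(\tau_n\e)=P-xQ$ are reciprocal and (because $|P|>|xQ|$) of the same sign; hence $\tfrac12\bigl||\s(\e)|-|\s(\e)|^{-1}\bigr|=|xQ|$, which yields the identity $|\log|\s(\e)||=g(x|Q|)=g\bigl(\s(X_n)|\s(q(\e))|\bigr)$. As $g$ is increasing, the theorem reduces to producing one $\s$ with $\s(X_n)|\s(q(\e))|\leq|b_{s,\s}|$, i.e.\ with $|\s(q(\e))|\cdot|s_\s\s(X_n)-\s(a_s)|\leq2$.

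The key idea is to test this last inequality not against $a_s$ but against the concrete lattice point $c:=(p(\e)-1)/q(\e)$, which lies in $\Z[X_{n-1}]$ precisely because $\e\in RE_n^{+}(1,2)$ forces $q(\e)\mid p(\e)-1$. For \emph{every} $\s$ a one-line computation gives $Q\bigl(s_\s x-\s(c)\bigr)=s_\s xQ-(P-1)=\sgn(P)\sqrt{P^2-1}-P+1$, using $s_\s=\sgn(PQ)$ and $|xQ|=\sqrt{P^2-1}$. Writing $w=|P|\geq1$ and $w-\sqrt{w^2-1}=(w+\sqrt{w^2-1})^{-1}\in(0,1]$, this quantity equals $1-(w-\sqrt{w^2-1})\in[0,1)$ when $P\geq1$ and $1+(w-\sqrt{w^2-1})\in(1,2]$ when $P\leq-1$; in either case $|\s(q(\e))|\cdot|s_\s\s(X_n)-\s(c)|\leq2$ holds for all $\s$. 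Equivalently, with $\tilde b_\s:=2\s(X_n)/|s_\s\s(X_n)-\s(c)|$ one obtains $|\log|\s(\e)||\leq g(\tilde b_\s)$ for every $\s$.

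It then remains to upgrade from $c$ to the optimal $a_s$, and this is exactly where the minimality of $a_s$ is used. Since $c\in\Z[X_{n-1}]$ is a competitor in the definition of $a_s$, we have $\sum_\s|s_\s\s(X_n)-\s(a_s)|^2\leq\sum_\s|s_\s\s(X_n)-\s(c)|^2$, so it is impossible that $|s_\s\s(X_n)-\s(a_s)|>|s_\s\s(X_n)-\s(c)|$ for every $\s$; hence there is a $\s_0$ with $|s_{\s_0}\s_0(X_n)-\s_0(a_s)|\leq|s_{\s_0}\s_0(X_n)-\s_0(c)|$. For this $\s_0$ the denominator defining $b_{s,\s_0}$ is no larger, so $|b_{s,\s_0}|\geq\tilde b_{\s_0}$, and combining with the previous paragraph gives $|\log|\s_0(\e)||\leq g(\tilde b_{\s_0})\leq g(|b_{s,\s_0}|)=C_{s,\s_0}$, as required. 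I expect the only genuinely non-routine step to be this comparison trick — establishing the pointwise bound for the explicit point $c=(p(\e)-1)/q(\e)$ and then invoking optimality of $a_s$ to convert the ``for all $\s$'' statement (with $c$) into the desired ``there exists $\s$'' statement (with $a_s$); the reciprocal-pair identity and the elementary estimate on $\sgn(P)\sqrt{P^2-1}-P+1$ are routine.
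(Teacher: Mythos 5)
Your proof is correct and is essentially the paper's argument run in the forward direction: the paper proves the contrapositive, showing that if $|\log|\s(\e)||>C_{s,\s}$ for every $\s$ then $(p(\e)-1)/q(\e)$ would be a lattice point strictly closer to $\mu_s(X_n)$ than $a_s$ in \emph{every} coordinate, contradicting the minimality of $a_s$ — which is exactly the contrapositive of your pigeonhole step extracting one good $\s_0$. Your ingredients — the exact identity $|\log|\s(\e)||=\log\bigl(\s(X_n)|\s(q(\e))|+\sqrt{\s(X_n)^2\s(q(\e))^2+1}\bigr)$, the Pell-type bound $|\s(q(\e))|\cdot|s_\s\s(X_n)-\s((p(\e)-1)/q(\e))|\le 2$, and the Euclidean optimality of $a_s$ — are the same quantities playing the same roles as in the paper's proof (there obtained via the inequalities (\ref{q_bounds}), (\ref{12mainBound}) and the triangle inequality), so the two arguments differ only in logical packaging.
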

\begin{proof}
  We prove the contraposition.
  We take $\e \in RE_{n}^{+}$ and omit ``$(\e)$'' from $p(\e)$ and $q(\e)$.
  We set $s=(\sgn(\s(p(\e)q(\e)))){\s\in\Gal(\B_{n-1}/\Q)}\in \{1,-1\}^{2^{n-1}}$.
  Then we have
  \begin{align*}
    \begin{cases}
      |\s(p)-\s(X_{n})\s(q)|<e^{-C_{s,\s}} & \text{if $|\s(\e)|>1$,}\\
      |\s(p)+\s(X_{n})\s(q)|<e^{-C_{s,\s}} & \text{if $|\s(\e)|<1$.}
    \end{cases}
  \end{align*}
  In either case, we have
  \begin{align*}
    |\s(p)|-|\s(X_{n})\s(q)|<e^{-C_{s,\s}}.
  \end{align*}
  Since $|\s(p)|=\sqrt{\s(X_{n})^2\s(q)^2+1}$, we have
  \begin{align*}
    \sqrt{\s(X_{n})^2\s(q)^2+1}<e^{-C_{s,\s}}+|\s(X_{n})\s(q)|
  \end{align*}
  and
  \begin{align}\label{q_bounds}
    |\s(q)|>\frac{e^{C_{s,\s}}-e^{-C_{s,\s}}}{2\s(X_{n})}.
  \end{align}
  An easy calculation shows that
  \begin{equation}\label{12mainBound}
      \frac{4\s(X_{n})}{e^{C_{s,\s}}-e^{-C_{s,\s}}}=|s_{\s}\s(X_{n})-\s(a_{s})|.
  \end{equation}
  Since $|\s(p)-\sgn(\s(pq))\s(X_{n})\s(q)|<1$, we have $|\s(p/q)-\sgn(\s(pq))\s(X_{n})|<|1/\s(q)|$.
  Thus we obtain
  \begin{align*}
    &\left|\s\left(\frac{p-1}{q}\right)-\sgn\left(\s\left(pq\right)\right)\s(X_{n})\right| \\
    &\leq \left|\s\left(\frac{p}{q}\right)-\sgn\left(\s\left(pq\right)\right)\s(X_{n})\right|+\left|\s\left(\frac{1}{q}\right)\right| \\
    &< \left| \s\left(\frac{2}{q}\right) \right|.
  \end{align*}
  Using (\ref{q_bounds}) and (\ref{12mainBound}) we have
  \begin{align*}
    \left|\s\left(\frac{p-1}{q}\right)-\sgn\left(\s\left(pq\right)\right)\s(X_{n})\right|<|s_{\s}\s(X_{n})-\s(a_s)|.
  \end{align*}
  Here we note that the nearest point of $\mu_{s}(X_{n})$ from $\mu_{n}(\Z[X_{n-1}])$ is $\mu_{n}(a_{s})$.
  Thus $\mu_{n}((p-1)/q)$ cannot be on the lattice $\mu_{n}(\Z[X_{n-1}])$, that is, $q\nmid p-1$ and $\e \not\in RE^{+}_{n}(1,2)$.
\end{proof}
\subsection{$(0,3)$-type}

Recall that
\[
  RE_n^-(0,3)=\{\e\in RE_n^-\mid p(\e)\mid q(\e)-1,\ p(\e)\mid X_n^2q(\e) -1\text{, and }\sgn(p(\e)q(\e))=1\}.
\]
Then, by \cref{keytheorem_03}, there is a bijection between $RE_{n}^-(0,3)$ and the set of $(0,3)$-type PCFs of $X_{n}$.
\begin{theorem}\label{maintheorem2_explicit_03}
  We fix $n\in\Z_{\geq 1}$.
  We set $b_{s,\s}=|\s(X_{n})+1|/|s_{\s}\s(X_n)-\s(a_s)|$ and $C_{s,\s}=\log(\sqrt{b_{s,\s}^2+1}+|b_{s,\s}|)$ for each $s\in\{1,-1\}^{2^{n-1}}$ and $\s \in \Gal(\B_{n-1}/\Q)$.
  If $\e\in RE_{n}^-(0,3)$, there exists $\s\in \Gal(\B_{n-1}/\Q)$ such that $|\log|\s(\e)||\leq C_{s,\s}$,
  where $s=(\sgn(\s(p(\e)q(\e))))_{\s\in\Gal(\B_{n-1}/\Q)}$.
\end{theorem}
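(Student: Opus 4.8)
The plan is to mirror the proof of \cref{maintheorem2_explicit_12} and argue by contraposition. Fixing $\e\in RE_n^-$ and setting $s=(\sgn(\s(p(\e)q(\e))))_{\s\in\Gal(\B_{n-1}/\Q)}$, I would assume that $|\log|\s(\e)||>C_{s,\s}$ holds for \emph{every} $\s$ and deduce $\e\notin RE_n^-(0,3)$. Abbreviating $p=p(\e)$ and $q=q(\e)$, so that $\s(\e)=\s(p)+\s(X_n)\s(q)$ and $\s(\tau_n(\e))=\s(p)-\s(X_n)\s(q)$, applying $\s$ to $N_{n/n-1}(\e)=-1$ gives $\s(p)^2-\s(X_n)^2\s(q)^2=-1$ for each $\s$. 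The target is to show that $\mu_n(a_1)$, where $a_1=(X_n^2q-1)/p$ is the first partial quotient produced in \cref{keytheorem_03}, would be strictly closer to $\mu_s(X_n)$ than the nearest lattice point $\mu_n(a_s)$. That is impossible unless $a_1\notin\Z[X_{n-1}]$, which forces $p\nmid X_n^2q-1$ and hence $\e\notin RE_n^-(0,3)$.

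The first step is the sign bookkeeping common to both types. Since $\s(X_n)=\sqrt{2+\s(X_{n-1})}>0$ for all $\s$ (because $-2<\s(X_{n-1})<2$) and $\s(p)\s(q)$ has sign $s_\s$, a short case analysis on whether $|\s(\e)|>e^{C_{s,\s}}$ or $|\s(\e)|<e^{-C_{s,\s}}$ shows that in either case the small conjugate is $\delta_\s:=\s(p)-s_\s\s(X_n)\s(q)$ with $|\delta_\s|<e^{-C_{s,\s}}$, while the large conjugate $\s(p)+s_\s\s(X_n)\s(q)$ has absolute value $>e^{C_{s,\s}}$. Adding these bounds yields $|\s(p)|>(e^{C_{s,\s}}-e^{-C_{s,\s}})/2$, which, using $e^{C_{s,\s}}=\sqrt{b_{s,\s}^2+1}+|b_{s,\s}|$, equals exactly $|b_{s,\s}|$.

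The key computation, specific to the $(0,3)$ case, is the identity $\s(a_1)-s_\s\s(X_n)=(-1-s_\s\s(X_n)\delta_\s)/\s(p)$, obtained by substituting $s_\s\s(X_n)\s(q)=\s(p)-\delta_\s$ into $\s(a_1)=(\s(X_n)^2\s(q)-1)/\s(p)$. Using $|\delta_\s|<e^{-C_{s,\s}}<1$ and $|\s(p)|>|b_{s,\s}|$ then gives
\[
    |\s(a_1)-s_\s\s(X_n)|\leq\frac{1+\s(X_n)|\delta_\s|}{|\s(p)|}<\frac{\s(X_n)+1}{|b_{s,\s}|}=|s_\s\s(X_n)-\s(a_s)|,
\]
the last equality being the definition of $b_{s,\s}$. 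This also explains why the numerator of $b_{s,\s}$ is $|\s(X_n)+1|$ rather than the $2\s(X_n)$ of the $(1,2)$ case: the constant $\s(X_n)$ comes from the term $\s(X_n)\delta_\s$ and the constant $1$ from the $-1$ in the numerator of $a_1$. Squaring and summing the displayed inequality over all $\s\in\Gal(\B_{n-1}/\Q)$ gives $\|\mu_n(a_1)-\mu_s(X_n)\|<\|\mu_s(X_n)-\mu_n(a_s)\|$, contradicting the minimality of $a_s$ unless $a_1\notin\Z[X_{n-1}]$, completing the contraposition.

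The main obstacle I expect is the combination of the sign bookkeeping with the correct choice of partial quotient: one must bound $a_1=(X_n^2q-1)/p$ rather than $a_3=(q-1)/p$, because only $a_1$ is asymptotically close to $s_\s\s(X_n)$, the quantity that governs $a_s$, whereas $a_3$ approaches $s_\s/\s(X_n)$ and would require a different nearest-neighbour comparison. Verifying that the small conjugate is uniformly $\s(p)-s_\s\s(X_n)\s(q)$ in both regimes under the norm condition $N_{n/n-1}(\e)=-1$ is exactly where the $\sgn(\s(pq))$ definition of $s$ is essential, and it is the only place where the sign of $N_{n/n-1}(\e)$ enters differently from the $(1,2)$ argument.
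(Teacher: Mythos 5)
Your proof is correct and takes essentially the same route as the paper's: contraposition, the lower bound $|\s(p)|>(e^{C_{s,\s}}-e^{-C_{s,\s}})/2=|b_{s,\s}|$, and the estimate showing that $\s\bigl((X_n^2q-1)/p\bigr)$ lies strictly closer to $s_{\s}\s(X_n)$ than $\s(a_s)$ does for every $\s$, which contradicts the minimality of $a_s$ unless $p\nmid X_n^2q-1$. The only differences are cosmetic: you obtain the bound on $|\s(p)|$ by adding the large- and small-conjugate estimates rather than via $|\s(X_n)\s(q)|=\sqrt{\s(p)^2+1}$, your identity $\s(a_1)-s_{\s}\s(X_n)=(-1-s_{\s}\s(X_n)\delta_{\s})/\s(p)$ is exactly the paper's triangle-inequality split written out algebraically, and you make the squaring-and-summing step explicit where the paper leaves it implicit.
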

\begin{proof}
  We prove the contraposition.
  In a similar manner to the proof of \cref{maintheorem2_explicit_12}, 
  we have
    \begin{equation*}
        |\s(X_{n})\s(q)|-|\s(p)|<e^{-C_{s,\s}}.
    \end{equation*}
    Since $|\s(X_{n})\s(q)|=\sqrt{\s(p)^2+1}$, we have
    \begin{equation*}
        \sqrt{\s(p)^2+1}<e^{-C_{s,\s}}+|\s(p)|
    \end{equation*}
    and
    \begin{equation}\label{p_bounds}
        |\s(p)|>\frac{e^{C_{s,\s}}-e^{-C_{s,\s}}}{2}.
    \end{equation}
    An easy calculation shows that
    \begin{equation}\label{03mainBound}
        \frac{2}{e^{C_{s,\s}}-e^{-C_{s,\s}}}=\left|\frac{s_{\s}\s(X_{n})-\s(a_s)}{\s(X_{n}+1)}\right|.
    \end{equation}
    In a similar manner to the proof of \cref{maintheorem2_explicit_12}, we obtain
    \begin{align*}
        &\left|\s \left(\frac{X_{n}^2q-1}{p}\right)-\sgn\left(\s\left(pq\right)\right)\s(X_{n})\right| \\
        &\leq \left|\s\left(\frac{X_{n}}{p}\right)\right|\left|\s(X_{n})\s(q)-\sgn\left(\s\left(pq\right)\right)\s(p)\right|+\left|\s\left(\frac{1}{p}\right)\right| \\
        &\leq\left|\s\left(\frac{1}{p}\right)\right|(\s(X_{n})+1).
    \end{align*}
    Using (\ref{p_bounds}) and (\ref{03mainBound}), we have
    \begin{equation*}
        \left|\s\left(\frac{X_{n}^2q-1}{p}\right)\right|<|s_{\s}\s(X_{n})-\s(a_s)|,
    \end{equation*}
    and the assertion follows.
\end{proof}

\section{Relationship with the generalized Pell equation and \cref{maintheorem1_explicit}}\label{rel_to_pell_eq}

In \cite{Yoshizaki2023}, the second author pointed out that there is an interesting relationship between the continued fraction
\begin{equation}\label{12_type_yoshizaki}
  X_n=\left[1,\overline{\frac{1}{1+{X_{n-1}}},2}\right]
\end{equation}
and the ``generalized Pell equation'';
\begin{equation*}\label{genPell}
  x^2-X_n^2y^2=1.
\end{equation*}
We set $\text{Sol}_n^+=\{(x,y)\in \Z[X_{n-1}]^2 \mid x^2-X_n^2y^2=1\}$. 
Then the map $RE_n^+ \to \text{Sol}_n^+;\e \mapsto (p(\e), q(\e))$ is a bijection.
In the case of $n=1$, $\text{Sol}_1^+$ is the set of integer solutions of the Pell equation $x^2-2y^2=1$,
and it is well-known that $RE_1^+$ is generated by $-1$ and $3+2\sqrt{2}$.
Recall that the generator $3+2\sqrt{2}$ is obtained from the first convergent $3/2$ of $\sqrt{2} = [1, \overline{2}]$.
In a similar manner to the case $n=1$,
we expect that generators of $RE_n^+$ are also obtained from the $1$st convergent and its conjugates of (\ref{12_type_yoshizaki}).
In fact,
the second author showed that this expectation is correct under the Weber's conjecture (\cite[Conjecture 3.6 and Theorem 4.1]{Yoshizaki2023}).
Note that the Weber's conjecture states that the class number $h_n$ of $\B_n$ is $1$ for each $n>0$ and it is proved for $1\leq n \leq 6$ (cf. Miller~\cite[Section 2]{Miller2014}).

From \cref{maintheorem1_explicit}, 
we obtain a $(0,3)$-type $\mathbb{Z}[X_{n-1}]$-PCF expansion of $X_n$ for each $n\in\Z_{\geq 1}$.
If we relax the minimality of non-periodic part, 
we regard it as a $(1,3)$-type $\mathbb{Z}[X_{n-1}]$-PCF expansion of $X_n$
\begin{equation}\label{13pcfofXn}
  \left[\frac{(\eta_n-\eta_{n-1})X_n-1}{\eta_{n-1}},\overline{\eta_{n-1},\frac{(\eta_n-\eta_{n-1})/X_n-1}{\eta_{n-1}},\frac{(\eta_n-\eta_{n-1})X_n-1}{\eta_{n-1}}}\right].
\end{equation}
In this paper, 
we obtain a similar relationship between the continued fraction (\ref{13pcfofXn}) and the generalized Pell equations
\begin{equation*}\label{pmGenPell}
  x^2-X_n^2y^2=\pm 1.
\end{equation*}
We set $\text{Sol}_n=\{(x,y)\in \Z[X_{n-1}]^2 \mid x^2-X_n^2y^2 \in \{\pm1\}\}$.
Then the map $RE_n \to \text{Sol}_n;\e \mapsto (p(\e), q(\e))$ is a bijection.
Recall that all integer solutions of the Pell equations $x^2-X_1^2y^2 = \pm 1$ are generated by the fundamental unit $p+qX_1$, 
where $p/q = 1/1$ is the $\ell-1 = 0$th convergent of the regular continued fraction $X_1 = \sqrt{2} = [1, \overline{2}]$ of type $(N,\ell) = (1,1)$.
In a similar manner to the case $n=1$,
we expect that generators of $RE_n$ are also obtained from the $2$nd convergent of the continued fraction (\ref{13pcfofXn}).
More precisely, let $p_{n,2}/q_{n,2}$ be the $2$nd convergent of (\ref{13pcfofXn}) and set
\begin{equation}
  A_n=\langle-1, \sigma(p_{n,2}+X_nq_{n,2})\mid \sigma\in\Gal(\mathbb{B}_n/\Q)\rangle_{\Z} 
\end{equation}
for each $n>0$.
Here, $\langle S \rangle_{\Z}$ is the subgroup of $RE_n$ generated by a subset $S\subset RE_n$.
We obtain the following proposition.

\begin{proposition}\label{fundsolofXn}
  We obtain $RE_n=A_n$ for each $n>0$ under the Weber's conjecture.
\end{proposition}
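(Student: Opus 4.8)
\medskip

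The plan is to prove the two inclusions $A_n \subseteq RE_n$ and $RE_n \subseteq A_n$ separately, with the second being the substantive direction. The inclusion $A_n \subseteq RE_n$ should be essentially immediate: by construction $p_{n,2}/q_{n,2}$ is the $2$nd convergent of the $(1,3)$-type PCF \eqref{13pcfofXn} obtained from $\eta_n$ in \cref{maintheorem1_explicit}, and \cref{0elltypetoPelleq} (applied to the periodic $(0,3)$-core) gives $p_{n,2}^2 - X_n^2 q_{n,2}^2 = -1$, so that $p_{n,2} + X_n q_{n,2} \in RE_n^-$. Since $RE_n$ is a group (indeed $RE_n^+$ and $RE_n$ are subgroups of $\Z[X_n]^\times$) and is stable under $\Gal(\B_n/\Q)$, every Galois conjugate $\s(p_{n,2}+X_n q_{n,2})$ and the element $-1$ lie in $RE_n$; hence the generated subgroup $A_n$ is contained in $RE_n$.

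\medskip

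For the reverse inclusion $RE_n \subseteq A_n$, I would first identify $p_{n,2}+X_n q_{n,2}$ explicitly with the unit $\eta_n$ of \cref{propertyOfUnits}. Tracing through the bijection of \cref{keytheorem_03}, the element attached to the $(0,3)$-PCF is precisely $\e = p(\e) + X_n q(\e)$ with $p(\e) = \eta_{n-1}$, which is exactly $\eta_n$; so the generators of $A_n$ are $-1$ together with the full Galois orbit $\{\s(\eta_n) \mid \s \in \Gal(\B_n/\Q)\}$. The task is then to show that this orbit, together with $-1$, generates all of $RE_n$. Here I would invoke the result of the second author \cite{Yoshizaki2023} (valid under Weber's conjecture, i.e. $h_n = 1$) that identifies generators of the relative unit group $RE_n$ in terms of the Galois conjugates of a single distinguished relative unit. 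The heart of the argument is a rank-and-index computation: the free part of $RE_n/\{\pm1\}$ has rank $2^{n-1}$ (matching the lattice $l_n(RE_n) \subset \R^{2^{n-1}}$ from the introduction), and one must check that the $2^{n-1}$-many Galois conjugates $\s(\eta_n)$ are multiplicatively independent modulo torsion and span a finite-index subgroup that is in fact the full group when $h_n=1$.

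\medskip

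The main obstacle will be the index-one claim: showing not merely that the conjugates of $\eta_n$ generate a finite-index sublattice of $l_n(RE_n)$, but that the index is exactly $1$. I expect this to rest on a comparison with the cyclotomic/circular units and an analytic class number formula for $\B_n$: the regulator of the lattice generated by the $\s(\eta_n)$ should equal (up to the known torsion factor) the full relative regulator precisely when $h_n=1$, which is where Weber's conjecture enters decisively. Concretely, I would relate $\eta_n$ to the Morisawa--Okazaki units \cite{MorisawaOkazaki2020} and use the fact that the circular units have index $h_n$ in the full unit group; under $h_n=1$ this index collapses, forcing $RE_n = A_n$. The subtlety to watch is the passage between the \emph{absolute} unit group and the \emph{relative} group $RE_n = N_{n/n-1}^{-1}(\{\pm1\})$, where one must verify that no additional units escape the group generated by $-1$ and the Galois orbit of $\eta_n$; I would handle this by comparing the two norm-kernel lattices via the map $l_n$ and checking surjectivity onto $l_n(RE_n)$ directly from the known fundamental-domain structure established in \cite{Yoshizaki2023}.
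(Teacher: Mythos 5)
Your opening moves match the paper exactly: the inclusion $A_n\subseteq RE_n$ is indeed immediate, and the identification $p_{n,2}+X_nq_{n,2}=\eta_n$ (via $p_{n,2}/q_{n,2}=\eta_{n-1}\big/\bigl((\eta_n-\eta_{n-1})/X_n\bigr)$) is precisely the first step of the paper's proof, so that $A_n=\langle -1,\s(\eta_n)\mid \s\in\Gal(\B_n/\Q)\rangle_{\Z}$. The strategic idea for the rest --- compare with cyclotomic units, whose index in the full unit group is $h_n$, and let Weber's conjecture collapse the index --- is also the paper's route.

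The gap is in how you execute that step. The result you propose to invoke from \cite{Yoshizaki2023} does not exist in the form you need: Theorem 4.1 there concerns $RE_n^+$ and the unit coming from the $(1,2)$-type expansion, not the full group $RE_n$ and the minus-unit $\eta_n$; the reusable ingredient is the index computation in \cite[Section 4]{Yoshizaki2023}. Your fallback mechanisms do not hold up as stated: the Galois orbit of $\eta_n$ has $2^n$ elements, not $2^{n-1}$, and they are \emph{not} multiplicatively independent modulo torsion --- applying $\s$ to $N_{n/n-1}(\eta_n)=-1$ gives $\s(\eta_n)\cdot\tau_n(\s(\eta_n))=-1$, which is exactly why the rank drops to $2^{n-1}$; and there is no ``fundamental-domain surjectivity'' statement in \cite{Yoshizaki2023} to check $l_n(A_n)=l_n(RE_n)$ against. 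No regulator or analytic class number formula argument is needed either. What the paper actually does is purely algebraic: (i) set $A_n^+=A_n\cap RE_n^+$ and use \cite[Lemma 3.2 (2)]{MorisawaOkazaki2020} to get $(RE_n:A_n)=(RE_n^+:A_n^+)$, which disposes of the minus part $RE_n^-$ (the ``subtlety'' you flagged); (ii) identify $A_n^+=RE_n^+\cap C_n$ for the cyclotomic units $C_n$ and use the exact sequence induced by the relative norm,
\begin{equation*}
1\rightarrow RE_n^+/A_n^+ \rightarrow \Z[X_n]^{\times}/C_n \rightarrow \Z[X_{n-1}]^{\times}/C_{n-1} \rightarrow 1,
\end{equation*}
together with $(\Z[X_n]^{\times}:C_n)=h_n$ \cite[Theorem 8.2]{Washington}, to conclude $(RE_n^+:A_n^+)=h_n/h_{n-1}$ \emph{unconditionally}; (iii) only then apply Weber's conjecture to get index $1$. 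Your sketch can be repaired by replacing the regulator/lattice heuristics with these two ingredients, but as written the heart of the argument is missing.
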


\begin{proof}
We have
\[
\frac{p_{n,2}}{q_{n,2}}=\frac{\eta_{n-1}}{(\eta_n-\eta_{n-1})/X_n}    
\]
and we see that $p_{n,2}+X_{n}q_{n,2}=\eta_n$. 
Thus,
we obtain
\[
A_n=\langle-1, \sigma(\eta_n)\mid \sigma\in\Gal(\mathbb{B}_n/\Q)\rangle_{\Z}.
\]
In what follows,
we will show $(RE_n : A_n)=h_n/h_{n-1}$ for $n\in\Z_{\geq 1}$, where $(RE_n : A_n)$ is the index.
Set $A_n^+=A_n\cap RE_n^+$.
By Morisawa and Okazaki~\cite[Lemma 3.2, (2)]{MorisawaOkazaki2020}, 
we see that $(RE_n : A_n)=(RE_n^+ : A_n^+)$.
Hence it is sufficient to show $(RE_n^{+} : A_n^{+})=h_n/h_{n-1}$,
which is already shown in Yoshizaki~\cite[Section 4]{Yoshizaki2023}.
For convenience,
we recall the outline of the proof of $(RE_n^{+} : A_n^{+})=h_n/h_{n-1}$.

Let $C_n$ be the group of cyclotomic units in $\mathbb{B}_n$ (cf.\ Washington~\cite[Chapter 8]{Washington}).
Then we have $A_n^+=RE_n^+\cap C_n$ and the relative norm induces the following exact sequence:
\[
1\rightarrow RE_n^+/A_n^+ \rightarrow \mathbb{Z}[X_n]^{\times}/C_n \rightarrow \mathbb{Z}[X_{n-1}]^{\times}/C_{n-1} \rightarrow 1.
\]
Since $(\Z[X_n]^{\times} : C_n)=h_n$ (cf.\ Washington~\cite[Theorem 8.2]{Washington}), 
we obtain $(RE_n^+ : A_n^+)=h_n/h_{n-1}$.
\end{proof}

\section*{Acknowledgements}
The authors grateful to Yoshinosuke Hirakawa for his careful reading and valuable comments on a draft of this paper.
The authors also would like to thank Prof. Tomokazu Kashio and Prof. Shuji Yamamoto for their valuable comments.

%---参考文献
\bibliographystyle{alpha}
\bibliography{references}

\end{document}